\documentclass[reqno, 11pt]{amsart}
\usepackage{amssymb}
\usepackage[nesting]{hyperref}



\usepackage[pdftex]{graphicx}
\usepackage{listings}
\usepackage{multirow}
\usepackage{placeins}
\usepackage{color}
\usepackage{subfigure}
\usepackage{lscape}
\usepackage{dsfont}
\usepackage{tikz}


\textheight=24cm \textwidth = 16cm \topmargin= -1cm \oddsidemargin
0mm \evensidemargin 0mm

\newcommand{\BlackBoxes}{\global\overfullrule5pt}

\BlackBoxes

\newcommand{\R}{\mathbb{R}}
\newcommand{\N}{\mathbb{N}}

\newcommand{\Eop}{\mathbb{E}}
\newcommand{\Prob}{\mathbb{P}}
\newcommand{\F}{\mathcal{F}}

\newcommand{\Hist}{{\mathcal H}}

\newtheorem{theorem}{Theorem}

\newtheorem{lemma}[theorem]{Lemma}

\theoremstyle{definition}

\newtheorem{remark}[theorem]{Remark}
\newtheorem{definition}[theorem]{Definition}
\numberwithin{equation}{section} \numberwithin{theorem}{section}

\def\0{\kern0pt\-\nobreak\hskip0pt\relax}

\makeatletter
\AtBeginDocument{%
 \def\@serieslogo{%
 \vbox to\headheight{%
 \parindent\z@ \fontsize{6}{7\p@}\selectfont
 \vss}}}

\def\makeoverbar#1#2#3#4#5#6#7{%
 \setbox0=\hbox{$\m@th#2\mkern#5mu{{}#3{}}\mkern#6mu$}%
 \setbox1=\null \dimen@=#4\fontdimen8#13 \dimen@=3.5\dimen@
 \advance\dimen@ by \ht0 \dimen@=-#7\dimen@ \advance\dimen@ by \wd0
 \ht1=\ht0 \dp1=\dp0 \wd1=\dimen@
 \dimen@=\fontdimen8#13 \fontdimen8#13=#4\fontdimen8#13
 \rlap{\hbox to \wd0{$\m@th\hss#2{\overline{\box1}}\mkern#5mu$}}
 \fontdimen8#13=\dimen@}

\def\mylabel#1#2{{\def\@currentlabel{#2}\label{#1}}}

\makeatother

\begin{document}


\makeatletter \providecommand\@dotsep{5} \makeatother

\title[Optimal Control of Partially Observable PDMP]{Optimal Control of Partially Observable Piecewise Deterministic Markov Processes}

\author[N. \smash{B\"auerle}]{Nicole B\"auerle${}^*$}
\address[N. B\"auerle]{Department of Mathematics,
Karlsruhe Institute of Technology, D-76128 Karlsruhe, Germany}

\email{\href{mailto:nicole.baeuerle@kit.edu}
{nicole.baeuerle@kit.edu}}

\author[D. \smash{Lange}]{Dirk Lange${}^\dag$}
\address[D. Lange]{Department of Mathematics,
Karlsruhe Institute of Technology, D-76128 Karlsruhe, Germany}

\email{\href{mailto:dirk.lange@ukit.edu} {dirk.lange@kit.edu}}


\begin{abstract}
In  this paper we consider a control problem for a Partially Observable Piecewise Deterministic Markov Process of the following type: After the jump of the process the controller receives a noisy signal about the state and the aim is to control the process continuously in time in such a way that the expected discounted cost of the system is minimized. We solve this optimization problem by reducing it to a discrete-time Markov Decision Process. This includes the derivation of a filter for the unobservable state. Imposing sufficient continuity and compactness assumptions we are able to prove the existence of optimal policies and show that the value function satisfies a fixed point equation. A generic application is given to illustrate the results.
\end{abstract}
\maketitle

\vspace{0.5cm}
\begin{minipage}{14cm}
{\small
\begin{description}
\item[\rm \textsc{ Key words}  ]
{\small Partially Observable Piecewise Deterministic Markov Process, Markov Decision Process, Filter, Updating-Operator}
\item[\rm \textsc{AMS subject classifications}]{\small Primary 60J25, 90C40 Secondary 93E11}
\end{description}
}
\end{minipage}

\section{Introduction}\label{sec:intro}
{\em Piecewise Deterministic Markov Processes} (PDMP) are characterized by three local characteristics: The drift, describing the deterministic movement between two jumps of the process, the jump intensity, governing the density of the probability distribution of the inter-jump times as well as the jump transition kernel, the probability distribution on the set of possible post-jump states given the current state of the process right before the jump. A PDMP thus starts in an initial state to then follow the deterministic path defined by the drift up to the first jump time.

Classical optimization problems can be formulated for PDMPs such as reward maximization or cost minimization. Minimum  expected average cost problems (see, e.g., \cite{BaeuerleDiscounted2001}, \cite{CostaDufcontav} or \cite{CostaDuf10}) as well as  minimum  expected total discounted  cost problems (see e.g. \cite{alm01}, \cite{DY}, \cite{ForSchal2004}) have intensively been treated for PDMP control problems. Optimal policies are in general relaxed controls, i.e. a control action is a probability distribution on the action space. The idea of reducing the continuous time control problem of a PDMP to a discrete time Markov Decision Process (MDP) is due to Yushkevich, see \cite{Yushkev80}. Actually, as the movement of the process between two jumps is deterministic, a pure post-jump consideration is sufficient for the treatment of optimal control problems for PDMPs.

The range of possible applications of the general PDMP control theory is broad. There are applications in insurance \cite{Schael1998}, communication networks \cite{Chafai2010}, reliability \cite{DD02}, neurosciences \cite{Pakdaman2010} and biochemics \cite{Lygeros2008} to only list a very short overview that illustrates the huge variety of domains of application.

In terms of pure mathematical treatment of PDMP control problems, the status up to 1993 can be found in \cite{Davis93}. Since then, important steps in the further development of this theory were, amongst others: In \cite{CostaRay2000} the authors consider impulse control of PDMPs without continuity or differentiability assumptions on the state. In \cite{alm01}, the control problem in continuous time is reduced to a problem in discrete time while working under even lower regularity assumptions. General conditions such as semi-analytic value functions or universally measurable selectors are applied. \cite{ForSchal2004} then considers, in contrast to the earlier works, problems with only locally bounded running cost functions. They show absolute continuity for the value function and that the value function is a (weak) solution of the Hamilton-Jacobi-Bellmann equation. In addition, they derive sufficient conditions for the existence of optimal deterministic feedback controls.

Later, with \cite{sdg10} and \cite{BrandSapDuf13} new results on numerical methods for optimal stopping problems for PDMPs appeared. In both works, the embedded process of the underlying PDMP is discretized by quantization. Remarkable about the paper \cite{BrandSapDuf13} is, however, that they treat an optimal stopping problem for a PDMP under partial observation. Such a setting is also considered in \cite{MJ03} where a replacement problem under partial information is considered. Whereas in  \cite{BrandSapDuf13} new information is only received after a jump, the information in \cite{MJ03} is received via monitoring at equidistant inspection times.  Besides these papers there are only very few works treating PDMP control problems under partial observation.
In \cite{KirchRunggaldier05}, a special convex hedging problem on a financial market with price processes following a geometric Poisson-distribution is considered. In the second part of this work, partial observation is modeled by assuming an unknown jump intensity. In \cite{bl10}, a problem of optimal inventory management is considered. Here, partial observation is modeled by assuming censored observations.

General works on PDMP control problems under partial observation do not exist yet. For their stopping problem, the authors of \cite{BrandSapDuf13} suggest to model partial observation by assuming only noisy measurement of the post-jump state of the PDMP which for other times than jump times, is assumed completely unobservable. Stopping, however is a very special control problem with only two control actions: {stop} or {continue}.

In this paper the first aim is to define a general model of a controlled PDMP under partial observation with the discounted cost criterion. We assume as in \cite{BrandSapDuf13} that the controller receives a noisy measurement of the post-jump state of the PDMP. Then we show how this continuous-time control problem can be reduced to a classical discrete-time MDP with a state space consisting of probability measures. This involves the derivation of a filter for the unobservable state.  We  next impose some continuity and compactness assumptions along with the introduction of a regularized filter in order to guarantee the existence of optimal policies. A problem which is known to be notoriously difficult (see e.g. \cite{Feinberg1}). Finally the value function of the optimization problem is shown to be a fixed point of an operator and the minimizer of the value function defines a stationary  optimal  policy.

Our paper is organized as follows: In the next section we briefly introduce the notation of an uncontrolled PDMP with partial observation. In Section \ref{sec:cPartialObs} we add controls. The optimization problem itself is explained in  Section \ref{sec:InitialProblem}. In the same section we show how to reduce the problem to a Partially Observable Markov Decision Process and derive the corresponding filter.  Afterwards in Section \ref{sec:oe} we present the optimality equation for the value function and prove existence of optimal controls under our assumptions. A generic application is given in Section \ref{sec:applic}.


\section{An uncontrolled PDMP with partial observation}\label{sec:uncontrolled}
In \cite{Davis84} the class of PDMPs has been introduced as a {\em general class of non-diffusion stochastic models}. A definition of a PDMP based on its infinitesimal generator is given there and thus strongly emphasizing the fact that a PDMP is a priori a continuous-time process. Recent publications such as \cite{BrandSapDuf13} or \cite{ForSchal2004} introduce a PDMP following an axiomatic approach stating a set of properties of a PDMP. We will follow the latter approach in this paper.

We first define an uncontrolled PDMP with partial observation before we consider in Section  \ref{sec:cPartialObs} controlled PDMPs under partial observation. An informal description of a PDMP with partial observation  is as follows: The process $(Y_t)_{t \geq 0}$ with values in $\R^d$ first evolves in a  deterministic way according to a certain drift $\Phi$. The \emph{drift} $\Phi: \R^d \times \R_+ \rightarrow \R^d$ is continuous and the mapping $t \mapsto \Phi(\cdot, t)$ is a semi-group with respect to concatenation of mappings, i.e. for all $y \in \R^d$ and $s,t>0$:
\begin{equation}
\Phi(y, t+s) =  \Phi(\Phi(y,s), t).
\end{equation}
$\Phi(y,t)$ is the state of the process $t$ time units after the last jump when the state directly after the jump was $y$.
Often in applications the drift $\Phi$ is given by a differential equation
\begin{equation}\label{eq:PDEDefinedDrift}
\frac{d}{dt} \Phi(y,t) = b\big(\Phi(y,t)\big), \hspace{0.5cm} \Phi(y,0)=y
\end{equation}
where  $b: \R^d \rightarrow \R^d$ is a vector field guaranteeing for all $y \in \R^d$ a unique componentwise continuous solution.

At the random time $T_1$  the process jumps unpredictably to a new state where the deterministic evolution continues until the next jump occurs. The  jump times   $0:=T_0< T_1<  \dots$ are $\R_+$-valued random variables such that $S_n := T_n-T_{n-1}, n\in\N$, $S_0 :=0$ and $T_n<T_{n+1}$ if $T_n<\infty$ else $T_n=T_{n+1}$. The jump times are generated by a \emph{jump rate} or \emph{intensity} $\lambda: \R^d \rightarrow (0, \infty)$ which is a measurable mapping of the state. A \emph{transition kernel}  $Q$ from $\R^d$ to $\R^d$  describes the probability $Q(B|y)$ that the process jumps into set $B$ given the state before the jump is $y$.

We assume now that the state of the PDMP cannot be observed directly. Several models might arise from this imperfect information about the system state. In view of applications to problems from telecommunications, engineering, supply chain or finance, the idea is to assume that one can at least measure (or estimate) the true state of the system with some measurement noise. We assume that at jump times of the PDMP we receive new information about the state. More precisely let $(\epsilon_n)_{n \in \N}$ be a sequence of $\R^d$-valued independent and identically distributed random variables $\epsilon_n: \Omega \to \R^d$ that are independent from all other random variables. We call $\epsilon_n$ \emph{observation noise} and denote its distribution by $Q_\epsilon$. We assume that the agent is able to observe $X_n := {Y}_{T_n}+\epsilon_n$ directly after the jump at time $T_n$.

Given the data $(\Phi,\lambda,Q,Q_\epsilon)$, an initial state $y$ and its observation $x$ there exists a probability space $(\Omega, \F,\Prob_{x,y})$  carrying the random variables $(T_n)$, $(Y_{T_n})$ and $(\epsilon_n)$ such that  $\Prob_{x,y}(Y_0=y, X_0=x)=1$ and for all $n\in\N, t\ge 0, C, D\in \mathcal{B}_d$, where $\mathcal{B}_d$ is the $\sigma$-algebra of Borel sets in $\R^d$, it holds that
\begin{eqnarray}
\nonumber &&\Prob_{x,y}(S_n \leq t,   {Y}_{T_n} \in C, X_n \in D  \mid S_0,  {Y}_{T_0}, X_0, \dots, S_{n-1},  {Y}_{T_{n-1}}, X_{n-1})\\
\nonumber &=& \Prob_{x,y}(S_n \leq t,   {Y}_{T_n} \in C, X_n \in D  \mid  {Y}_{T_{n-1}})\\
\nonumber &=&\int_{0}^t\int_C \Prob_{y}(X_n \in D \mid  {Y}_{T_n}=y') \\
\nonumber && \hspace*{3cm}\Prob_{x,y}(ds,dy' \mid  {Y}_{T_{n-1}}) \\
\nonumber &=&\int_0^t\int_C Q_\epsilon(D-y')  \exp\big(-\Lambda({Y}_{T_{n-1}},s)\big) \lambda\big(\Phi({Y}_{T_{n-1}},s)\big)  Q\big(dy'|\Phi({Y}_{T_{n-1}},s)\big) ds\\
&=&  \int_0^t \exp\big(-\Lambda({Y}_{T_{n-1}},s)\big) \lambda\big(\Phi({Y}_{T_{n-1}},s)\big)  \int_C Q_\epsilon(D-y')   Q\big(dy'|\Phi({Y}_{T_{n-1}},s)\big) ds
\end{eqnarray}
where $\Lambda(y,t) := \int_0^t \lambda\big(\Phi(y,s)\big)ds$.
The (unobservable) process itself is then given by
\begin{equation}
Y_t  := \Phi(Y_{T_n},t-T_n), \quad  \mbox{ for } T_n\le t<T_{n+1}, n\in\N_0.
\end{equation}
In what follows  we define the embedded process of $(Y_t)$ by $\hat{Y}_n := Y_{T_n}$ in order to ease notation. Note that $(T_n,\hat{Y}_n,X_n)$ is a marked point process. We call such a process {\it Partially Observable Piecewise Deterministic Markov Process (POPDMP)}. In the general definition of a PDMP boundary points of the state space may exist which force jumps back into the interior of the state space when reached. In order to ease the following analysis we neglect such a behavior in our model. It would have a severe impact on the filter which we need later.

\section{Controlled POPDMP under Partial Observation}\label{sec:cPartialObs}
Now we assume that the POPDMP can be controlled in continuous time. The set of actions is denoted by $A$. In order to  prove existence of optimal policies later we need the following assumption. \\

{\em Assumption:}
\begin{description}
  \item[(C1)] The action space $A$ is a compact metric space.
\end{description}

\vspace*{0.4cm}

We denote by  $\mathcal{P}(A)$ the set of all probability measures on $(A,\mathcal{B}_A)$ with the weak topology. From the theory of deterministic control it is well-known that in order to prove the existence of optimal controls we have to work with relaxed controls. The space $\mathcal{R}$ of \emph{relaxed controls} is given by $$ \mathcal{R} := \left\{ r  : [0,\infty) \rightarrow \mathcal{P}(A) \; \mid r \; \mathrm{ is \; measurable} \right\}.$$
On $\mathcal{R}$ we work with the Young topology (for convergence in Young topology see the appendix). Note that under assumption (C1), the space  $\mathcal{R}$ is compact under the Young topology (see e.g. \cite{Davis93} Proposition 43.3 and Definition 43.4 together with the comment thereafter).

Next we define the set of observable histories up to time $T_n$. Let $\Hist_0 :=  \R^d$ and for $n\in\N$
$$ \Hist_n := \Hist_{n-1} \times\mathcal{R} \times \R_+ \times \R^d$$
and endow this space with the corresponding product $\sigma$-algebra. An element denoted by \linebreak $h_n=(x_0,r_0,s_1,x_1, \dots, r_{n-1},s_n,x_n) \in \Hist_n$ is called \emph{observed history up to time $T_n$}. It consists of the received signals, the chosen controls and the inter-arrival times of jumps up to $T_n$. A decision rule for the period $[T_n,T_{n+1})$ is a measurable mapping $$\pi_n^P: \Hist_n \times [0, \infty) \to \mathcal{P}(A).$$
The upper $P$ in the notation stands for {\em piecewise}.
For $n \in \N_0$, the space of all decision rules for the period $[T_n, T_{n+1})$ is denoted by $\Pi_n^P$ and the space of all history dependent relaxed piecewise open loop policies is defined as
$$\Pi^P := \Pi_0^P\times \Pi_1^P\times \ldots.$$
Executing a history dependent relaxed piecewise open loop policy $\pi^P=(\pi_0^P, \pi_1^P,\dots) \in \Pi^P$ means executing, at time $t \geq 0$
\begin{equation}\label{eq:DarstellungPit}
\pi_t:= \sum_{n=0}^\infty 1_{\{T_n \leq t < T_{n+1}\}}(t) \cdot \pi_n^P(H_n, t-T_n),
\end{equation}
where $H_n= (X_0,\pi_0^P(X_0,\cdot),S_1,X_1,\ldots,\pi^P_{n-1}(H_{n-1},\cdot),S_n,X_n)$. There is an alternative way of introducing policies which will be crucial later on and which we explain now. A \emph{discrete time history dependent relaxed control policy} is a sequence $\pi^D := ( \pi_0^D, \pi_1^D, \dots)$ of discrete time history dependent decision rules where $\pi_n^D: \Hist_n \to \mathcal{R}$ is measurable. The upper $D$ in the notation stands for {\em discrete}.  Note that $\pi_n^D(h_n)$ is a function in time and $\pi_n^D(h_n)(t)$ is the (randomized) action applied $t$ time units after the $n$-th jump at time $T_n$. Here instead of a continuous-time control we have a discrete-time policy which is applied after jump time points and which now consists of functions.
We write $\Pi_n^D$ for the set of all discrete time history dependent decision rules at stage $n$ and define the set of all discrete time history dependent relaxed control policies as $\Pi^D:= \Pi_0^D\times \Pi_1^D\times\ldots$. Note that the following statement holds which is essentially a measurability issue. For a proof see \cite{lange17} Theorem 2.11.

\begin{lemma}[Correspondence Lemma] \label{theo:CorrespondenceTheorem} Let $n \in \N_0$. For every $\pi_n^P \in \Pi_n^P$ there exists  $\pi_n^D \in \Pi_n^D$ such that
\begin{equation}
\pi_n^P(h_n, t) = \pi_n^D(h_n)(t) \hspace{0.5cm}  \mathrm{a.e. ~on~} \R_+~ ~\mbox{ for all } h_n \in \Hist_n
\end{equation}
and vice-versa.
\end{lemma}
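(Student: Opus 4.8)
The statement is a pair of measurability assertions, so the plan is to treat the two directions separately, using throughout that under assumption (C1) the space $(\mathcal{R}, \mathrm{Young})$ is compact and metrizable, hence Polish with a countably generated Borel $\sigma$-algebra. First I would record the analytic description of the Young topology from the appendix: $r_k \to r$ precisely when
$$
\int_0^\infty f(t) \int_A g(a)\, r_k(t)(da)\, dt \longrightarrow \int_0^\infty f(t)\int_A g(a)\, r(t)(da)\, dt
$$
for all integrable $f$ and all $g \in C(A)$. Since $\mathcal{R}$ is metrizable and compact, it suffices to let $f$ and $g$ range over countable dense families; the resulting countable collection of continuous functionals $\Lambda_{f,g}(r) := \int_0^\infty f(t)\int_A g(a)\, r(t)(da)\, dt$ then separates points and generates the Borel $\sigma$-algebra of $\mathcal{R}$. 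Consequently, a map $\psi: \Hist_n \to \mathcal{R}$ is measurable if and only if $h_n \mapsto \Lambda_{f,g}(\psi(h_n))$ is measurable for each $f,g$ in these families.

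For the direction $\pi_n^P \rightsquigarrow \pi_n^D$ I would set $\pi_n^D(h_n) := \pi_n^P(h_n, \cdot)$. Joint measurability of $\pi_n^P$ ensures that each section $t \mapsto \pi_n^P(h_n,t)$ is measurable, so $\pi_n^D(h_n)$ genuinely lies in $\mathcal{R}$; and, using that integration of the bounded continuous $g$ against the measurably varying measure $\pi_n^P(h_n,t)(\cdot)$ is jointly measurable in $(h_n,t)$, Fubini's theorem shows that $h_n \mapsto \Lambda_{f,g}(\pi_n^D(h_n))$ is measurable. By the criterion above, $\pi_n^D$ is then measurable into $(\mathcal{R}, \mathrm{Young})$, and here the equality $\pi_n^P(h_n,t) = \pi_n^D(h_n)(t)$ holds for every $t$.

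The converse $\pi_n^D \rightsquigarrow \pi_n^P$ is the delicate part, and it is where the a.e.\ qualifier originates, because elements of $\mathcal{R}$ are only determined up to Lebesgue-null modifications in $t$. The plan is to identify each relaxed control $r$ with the $\sigma$-finite measure $m_r(dt,da) := r(t)(da)\,dt$ on $\R_+ \times A$ whose time-marginal is Lebesgue measure; the Young topology then corresponds to the weak topology on this family of measures, so measurability of $h_n \mapsto \pi_n^D(h_n)$ transfers to measurability of the family $h_n \mapsto m_{\pi_n^D(h_n)}$. A measurable disintegration theorem (regular conditional distributions depending measurably on the parameter $h_n$) then yields a jointly measurable stochastic kernel $(h_n,t) \mapsto \pi_n^P(h_n,t) \in \mathcal{P}(A)$ disintegrating $m_{\pi_n^D(h_n)}$ along the time axis, and by uniqueness of the disintegration $\pi_n^P(h_n,\cdot)$ agrees with $\pi_n^D(h_n)$ for a.e.\ $t$, for each fixed $h_n$.

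The main obstacle is precisely this converse direction: unfolding a single measurable map into the abstract quotient space $\mathcal{R}$ into an honestly jointly measurable function of $(h_n,t)$. A naive approximation of $\pi_n^D$ by $\mathcal{R}$-valued simple functions fails, since Young (weak) convergence does not transfer to pointwise-in-$t$ convergence of chosen representatives; the disintegration and measurable-selection route circumvents this, at the unavoidable cost of the a.e.\ ambiguity recorded in the statement.
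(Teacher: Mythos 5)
Note first that the paper itself does not prove this lemma: it is quoted from \cite{lange17}, Theorem 2.11, so there is no in-paper argument to compare yours against; I can only judge your proof on its own terms, and on those terms it is correct and follows the standard route for results of this kind. The direction $\pi_n^P \rightsquigarrow \pi_n^D$ is fine: since $A$ is compact metric, $C(A)$ and $L^1(\R_+)$ are separable, your countable family of functionals $\Lambda_{f,g}$ consists of Young-continuous maps that separate the points of $\mathcal{R}$ (understood, as you correctly note, as a.e.-equivalence classes), and a countable separating family of continuous functions on a compact metrizable space generates the Borel $\sigma$-algebra (the induced continuous injection into $\R^{\N}$ is a homeomorphism onto its image); joint measurability of $\pi_n^P$ plus Fubini then gives measurability of $h_n \mapsto \Lambda_{f,g}(\pi_n^P(h_n,\cdot))$, and your appeal to compactness also justifies the ``precisely when'' in your sequential characterization via product test functions. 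For the converse direction, the identification $r\mapsto m_r(dt,da)=r_t(da)\,dt$ followed by a parametrized disintegration is the right tool, and you correctly locate there the reason the a.e.\ qualifier is unavoidable. Two points should be made explicit to close the argument: (i) Lebesgue measure on $\R_+$ is only $\sigma$-finite, so one should disintegrate the probability measures $e^{-t}\pi_n^D(h_n)(t)(da)\,dt$ (or truncate to $[0,N]$ and patch), making the first marginal a fixed probability law; (ii) the ``measurable disintegration theorem'' you invoke as a black box is not the bare existence of regular conditional distributions but its stochastic-kernel version, jointly measurable in the parameter $h_n$; this is available in a reference the paper already cites, namely Proposition 7.27 of \cite{BertShrev}, applied to the kernel $h_n\mapsto e^{-t}\pi_n^D(h_n)(t)(da)\,dt$ on $\R_+\times A$ given $\Hist_n$, whose kernel-measurability follows from your marginal-functional criterion. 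Alternatively, one can construct the disintegration by hand via Lebesgue differentiation in $t$ of $(h_n,t)\mapsto \int_{[0,t]\times A} g_j\, dm_{\pi_n^D(h_n)}$ along a countable dense set $\{g_j\}\subset C(A)$, which yields joint measurability directly and agrees a.e.\ with $\pi_n^D(h_n)$ by uniqueness of densities. With either of these completions your sketch is a full proof.
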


Upon choosing a policy in $\Pi^P$ we are able to control the data of our POPDMP in the following way. Suppose the history $h_n$ is given up to time $T_n$ and $ \pi_n^D(h_n)=r$. Then on the time interval $[T_n,T_{n+1})$ the relaxed control $r$ influences the drift which we denote in general by $\Phi^r$ and  $\Phi^r: \R^d \times \R_+ \rightarrow \R^d$ is continuous and the mapping $t \mapsto \Phi^r(\cdot, t)$ is a semi-group with respect to concatenation of mappings, i.e. for all $y \in \R^d$ and $s,t>0$:
\begin{equation*}
\Phi^r(y, t+s) =  \Phi^r(\Phi^r(y,s), t).
\end{equation*}
For example let $b: \R^d \times A \rightarrow \R^d$ be a vector field such that for all $y \in \R^d$ and  all relaxed controls $r \in \mathcal{R}$ the initial value problem
\begin{equation}\label{eq:PDEControlledDrift}
\frac{d}{dt} \Phi^{r}(y,t) = \int_A b(\Phi^{r}(y,t),a) ~r_t(da), \hspace{0.5cm} \Phi^{r}(y,0)=y
\end{equation}
has a unique componentwise continuous solution $\Phi^{r}(y, \cdot) : [0, \infty) \rightarrow \R^d$. Then $\Phi^r$ could be such a drift function. The relaxed control also influences the measurable \emph{jump rate} $\lambda^A: \R^d\times A \rightarrow (0, \infty)$ and the action which is applied at the time point of a jump influences the transition kernel $Q^A$ from $\R^d\times A$ to $\R^d$.

\begin{definition} [Controlled POPDMP] \label{def:ControlledPDMP} A \emph{Controlled Partially Observable Piecewise Deterministic Markov Process}  with \emph{local characteristics} $(\Phi^r, \lambda^A, Q^A, Q_\epsilon)$ is a stochastic process $(Y_t)_{t \geq 0}$ that satisfies the following properties:
Fix $\pi \in \Pi^P$ (we write $\pi$ here instead of $\pi^P$ to ease notation) and an initial state $y$ with observation $x$. There exists a probability space $(\Omega, \F,\Prob_{x,y}^{\pi})$ which carries  random variables $(T_n)$, $(\hat{Y}_n)$, $(\epsilon_n)$ such that $\Prob_{x,y}^{\pi}(Y_0=y,X_0=x)=1$ and for all $t \geq 0, n \in \N_0$ and $C,D\in \mathcal{B}_d$ it holds that:
\begin{eqnarray}
\nonumber&&\Prob_{x,y}^{\pi}(S_n \le  t, \hat{Y}_n\in C, X_n \in D |S_0, \hat{Y}_0, X_0, \pi_0 \dots , S_{n-1}, \hat{Y}_{n-1}, X_{n-1}, \pi_{n-1}) \\
&=& \Prob_{x,y}^{\pi}(S_n \le  t, \hat{Y}_n\in C, X_n \in D | \hat{Y}_{n-1}, \pi_{n-1}(H_{n-1})) \\
\nonumber&=& \int_0^t \exp\big( -\Lambda^{\pi_{n-1}}(\hat{Y}_{n-1},s)\big)  \int_A\lambda^A\big(\Phi^{\pi_{n-1}}(\hat{Y}_{n-1},s),a\big)  \\
&& \hspace*{2cm}\int_C Q_\epsilon(D-y') Q^A\big(dy' | \Phi^{\pi_{n-1}}(\hat{Y}_{n-1},s),a\big) \pi_{n-1}(H_{n-1},s)(da)ds
\end{eqnarray}
where $\Lambda^{r}(y,t) := \int_0^t \int_A\lambda^A(\Phi^{r}(y,s),a) r_s(da) ~ds$ and we use the short-hand notation $\Phi^{\pi_{n-1}}$ instead of $\Phi^{\pi_{n-1}(H_{n-1},\cdot)}$. Note that we apply the Correspondence Lemma \ref{theo:CorrespondenceTheorem} here.
The process $(Y_t)$ is then defined by
\begin{equation}
Y_t  := \Phi^{\pi_n}(Y_{T_n}, t- T_n) \mbox{ for } T_n \leq t < T_{n+1}, n \in \N_0.
\end{equation}

\end{definition}

For our existence result we need the following continuity assumptions:\\

{\em Assumption:}
\begin{description}
  \item[(C2)] $\lambda^A: \R^d\times A \rightarrow (0, \infty)$ is continuous and bounded from above by $\bar{\lambda}$ and from below by $\underline{\lambda}>0$.
  \item[(C3)] $Q^A$ is weakly continuous, i.e. $(x,a)\mapsto \int v(z) Q^A(dz|x,a)$ is continuous and bounded for all $v:\R^d\to \R$ continuous and bounded.
\end{description}

Note that (C2) implies that $T_n \uparrow \infty$ $\Prob_{xy}-a.s.$ for all $x,y\in\R^d$.
\vspace*{0.4cm}

\section{The optimization problem}\label{sec:InitialProblem}
In this section we will introduce our optimization problem and transform it into a Markov Decision Process (MDP) which can be solved with standard techniques. We will do this in two steps: First we rewrite our continuous-time control problem for the Partially Observable Piecewise Deterministic Markov Process as a discrete-time control problem for a Partially Observable Markov Decision Process. Then we reduce this Partially Observable Markov Decision Process to a Markov Decision Process with complete observation. This problem will then be solved in the next section.

Let $\beta \in \R_+$ be a discount rate and $c: \R^d \times A \rightarrow \R_+$ be  a measurable cost rate. The initial distribution of $Y_0$ given the observation $X_0=x$ is given by the transition kernel $Q_0(\cdot|x)$. We define the \emph{cost of policy} $\pi\in\Pi^P$ under an initial observation $x \in \R^d$ by (we write $\pi$ instead of $\pi^P$ in order to ease notation)
\begin{equation}\label{eq:CostOfStrategy}
J(x,  \pi) := \int \Eop_{x,y}^{\pi} \left[ \int_0^{\infty} e^{-\beta t} \int_A c(Y_t, a) ~\pi_t(da) ~dt \right] Q_0(dy|x).
\end{equation}

The \emph{value function} of the control model gives the minimal cost under an initial observation $x \in \R^d$ and is defined as
\begin{equation}\label{eq:ValueFunctionControlModelTimeContinuous}
J(x) := \inf_{\pi \in \Pi^P} J(x, \pi) \hspace{0.5cm} \mbox{ for all } ~x \in \R^d.
\end{equation}
The \emph{optimization problem} is then to find, for $x \in \R^d$, a policy $\pi^\star \in \Pi^P$ such that  we get
\begin{equation} J(x) = J(x,\pi^\star). \end{equation}
This problem can be rewritten as a Partially Observable Markov Decision Process in discrete time (POMDP) where we focus on the jump time points only.

\begin{definition}\label{def:translawMDP}
Consider the following discrete-time Partially Observable Markov Decision Model:
\begin{enumerate}
  \item[(i)] The state space of this process is given by $\R_+\times  \R^{2d}$ and a typical state is denoted by $(s,y,x)$. The interpretation of the state is that  $y$ is the (unobservable) state directly after the jump which occured $s$ time units after the previous jump and $x$ is the observation.
  \item[(ii)] The action space is given by $\mathcal{R}$ and a typical action is denoted by $r$.
  \item[(iii)] The {\em substochastic} transition law is for all $x,y \in \R^d, t \geq 0, n \in \N_0$ and $C, D\in \mathcal{B}_d$ given by
\begin{eqnarray}
\nonumber && \tilde{Q}\big([0,t]\times C\times D|s,y,x,r\big)= \tilde{Q}\big([0,t]\times C\times D|y,r\big) \\
&=& \int_0^t \exp\big( -\Gamma^r(y,u)\big)  \int_A\lambda^A\big(\Phi^r(y,u),a\big)
\int_C Q(D-y') Q^A\big(dy' | \Phi^r(y,u),a\big) r_u(da)du
\end{eqnarray}
where $\Gamma^r(y,t) := \beta t+\int_0^t\int_A \lambda^A(\Phi^r(y,u),a) r_u(da) du$. Note that in case $\lambda^A \equiv \lambda$ we have $\tilde{Q}\big([0,\infty)\times \R^{2d}|y,r\big)=\frac{\lambda}{\beta+\lambda} < 1$.
 \item[(iv)] The  one-stage cost depends only on $y\in \R^d, r\in \mathcal{R}$ and is given by
 \begin{eqnarray}
\nonumber g(y, r) &:=& \Eop_{y}^\pi \left[ \int_0^{T_1} e^{-\beta t} \int_A c( \Phi^r(y, t), a) ~r_t(da) ~dt \right]\\
&=& \int_0^\infty \exp\big(-\Gamma^r(y,t)\big) \int_A c( \Phi^r(y, t), a) ~r_t(da) ~dt.
\end{eqnarray}
\end{enumerate}
\end{definition}

The last equation follows from the fact that the density of $T_1$ under $\Prob_{x,y}^\pi$ is given by
$$ f_{T_1}(y,t) = e^{-\Lambda^r(y,t)} \int_A \lambda^A\big( \Phi^r(y,t),a\big) r_t(da) $$
and with the help of Fubini's Theorem. In order to ease notation we still denote the corresponding POMDP by  $(S_n,\hat{Y}_n,X_n)$.
According to the Theorem of Ionescu Tulcea $Q_0(\cdot|x)$ together with the transition kernel $\tilde{Q}$ defines a probability measure $\tilde{\Prob}_x$. The difference between $\Prob_{x,y}$ and $\tilde{\Prob}_x$ is that $\tilde{\Prob}_x$ keeps track of discounting and is thus in general substochastic.
For the POMDP, policies are defined as history dependent relaxed control policies $\pi^D := ( \pi_0^D, \pi_1^D, \dots)$ with $\pi_n^D: \Hist_n \to \mathcal{R}$ measurable.

For a policy $\pi \in \Pi^D$ (we write $\pi$ instead of $\pi^D$ to ease notation) and an initial observation $x \in \R^d$  we define the cost of policy $\pi$ as
\begin{equation}
\tilde{J}(x, \pi) := \widetilde{\Eop}_x^{\pi} \left[ \sum_{k=0}^\infty  g(\hat{Y}_k, \pi_k(H_k)) \right]
\end{equation}
where $\widetilde{{\Eop}}_x$ is the expectation with respect to the probability measure $\tilde{\Prob}_x$.

The \emph{value function} of the discrete time control model gives the minimal cost under an initial observation $x \in \R^d$ and is defined as
\begin{equation}\label{eq:ValueFunctionControlModelTimeDiscrete}
\tilde{J}(x) := \inf_{\pi \in \Pi^D} \tilde{J}(x, \pi) \hspace{0.5cm} \forall ~x \in \R^d.
\end{equation}
The \emph{discrete time optimization problem} is then to find, for $x \in \R^d$, a policy $\pi^{\star } \in \Pi^D$ such that  we get
$\tilde{J}(x) = \tilde{J}(x,\pi^{\star }).$ The next lemma shows that this problem is equivalent to controlling the POPDMP in \eqref{eq:CostOfStrategy}.

\begin{lemma} \label{prop:EquivalenceTimeContToTimeDiscIOptProb} Let $x \in \R^d$ be an initial observation, $\pi^P \in \Pi^P$ a history dependent relaxed piecewise open loop control policy for the POPDMP and $\pi^D \in \Pi^D$ its corresponding discrete-time policy according to the Correspondence Lemma. Then, it holds
$$J(x, \pi^P) = \tilde{J}(x, \pi^D).$$
\end{lemma}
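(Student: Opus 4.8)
The plan is to decompose the infinite-horizon integral defining $J(x,\pi^P)$ along the jump intervals $[T_n,T_{n+1})$, to identify on each piece the one-stage cost $g$, and finally to absorb the accumulated discount factors into the substochastic measure $\tilde{\Prob}_x$.

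First I would split the time integral. Since $\lambda^A\ge\underline{\lambda}>0$ forces $T_n\uparrow\infty$ $\Prob_{x,y}^{\pi}$-a.s.\ (the remark after (C3)), we have $\sum_{n\ge 0}1_{\{T_n\le t<T_{n+1}\}}=1$ a.s., so by \eqref{eq:DarstellungPit} and $Y_t=\Phi^{\pi_n}(\hat Y_n,t-T_n)$ on $[T_n,T_{n+1})$,
\begin{equation*}
\int_0^\infty e^{-\beta t}\int_A c(Y_t,a)\,\pi_t(da)\,dt
=\sum_{n=0}^\infty\int_{T_n}^{T_{n+1}}e^{-\beta t}\int_A c\big(\Phi^{\pi_n}(\hat Y_n,t-T_n),a\big)\,\pi_n^P(H_n,t-T_n)(da)\,dt.
\end{equation*}
Substituting $u=t-T_n$ and using the Correspondence Lemma~\ref{theo:CorrespondenceTheorem} to replace $\pi_n^P(H_n,\cdot)$ by $r:=\pi_n^D(H_n)\in\mathcal{R}$ (they coincide for a.e.\ $u$, hence under the $du$-integral), the $n$-th summand becomes $e^{-\beta T_n}\int_0^{S_{n+1}}e^{-\beta u}\int_A c(\Phi^{r}(\hat Y_n,u),a)\,r_u(da)\,du$.

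Next I would evaluate the conditional expectation given the information $\F_{T_n}$ accumulated up to the $n$-th jump, which determines $\hat Y_n$, $H_n$ and hence $r=\pi_n(H_n)$. By the transition law in Definition~\ref{def:ControlledPDMP} the conditional law of $S_{n+1}$ has density $f_{T_1}(\hat Y_n,\cdot)$ and survival function $s\mapsto e^{-\Lambda^{r}(\hat Y_n,s)}$. Writing $\psi(u):=e^{-\beta u}\int_A c(\Phi^{r}(\hat Y_n,u),a)\,r_u(da)$, Tonelli's theorem gives
\begin{equation*}
\Eop_{x,y}^{\pi}\Big[\int_0^{S_{n+1}}\psi(u)\,du\,\Big|\,\F_{T_n}\Big]
=\int_0^\infty\psi(u)\,e^{-\Lambda^{r}(\hat Y_n,u)}\,du
=\int_0^\infty e^{-\Gamma^{r}(\hat Y_n,u)}\int_A c(\Phi^{r}(\hat Y_n,u),a)\,r_u(da)\,du,
\end{equation*}
where I used $e^{-\beta u}e^{-\Lambda^{r}}=e^{-\Gamma^{r}}$. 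The right-hand side is exactly $g(\hat Y_n,\pi_n(H_n))$, and since $e^{-\beta T_n}$ is $\F_{T_n}$-measurable, the expectation of the $n$-th summand equals $\int\Eop_{x,y}^{\pi}\big[e^{-\beta T_n}g(\hat Y_n,\pi_n(H_n))\big]\,Q_0(dy\mid x)$.

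It remains to pass to the substochastic measure. Comparing $\tilde Q$ in Definition~\ref{def:translawMDP} with the controlled transition law, the two kernels differ only through $\exp(-\Gamma^{r})=e^{-\beta u}\exp(-\Lambda^{r})$, so each $\tilde Q$-transition carries an extra factor $e^{-\beta S_k}$ relative to the $\Prob_{x,y}^{\pi}$-transition. Constructing both families by the Theorem of Ionescu--Tulcea and iterating over the $n$ transitions, one obtains, for every nonnegative measurable function $f$ of the trajectory up to the $n$-th jump,
\begin{equation*}
\widetilde{\Eop}_x^{\pi}\big[f\big]=\int\Eop_{x,y}^{\pi}\big[e^{-\beta T_n}f\big]\,Q_0(dy\mid x),
\qquad e^{-\beta T_n}=\prod_{k=1}^n e^{-\beta S_k}.
\end{equation*}
Taking $f=g(\hat Y_n,\pi_n(H_n))$ and interchanging the nonnegative sum with the expectation by Tonelli yields $J(x,\pi^P)=\sum_{n\ge 0}\widetilde{\Eop}_x^{\pi}[g(\hat Y_n,\pi_n(H_n))]=\tilde J(x,\pi^D)$. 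The main obstacle is this last step: making the change-of-measure identity precise requires carefully tracking how each discount factor $e^{-\beta S_k}$ enters the successive Ionescu--Tulcea kernels (bearing in mind that $\tilde{\Prob}_x$ is only substochastic), whereas the interval decomposition and the Tonelli interchanges are routine once one uses $c\ge 0$.
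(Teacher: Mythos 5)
Your proposal is correct and follows essentially the same route as the paper's proof: decomposition of the cost integral over the jump intervals, the Correspondence Lemma to pass to $\pi^D$, conditioning at $T_n$ to identify the one-stage cost $g$ (you via the survival function $e^{-\Lambda^r}$ and Tonelli, the paper via the density of $T_1$ and Fubini, which is equivalent), and finally absorbing the factors $e^{-\beta T_n}$ into the substochastic measure $\tilde{\Prob}_x^{\pi^D}$. Your explicit Ionescu--Tulcea change-of-measure identity is exactly the step the paper asserts in its closing remark, just spelled out in more detail.
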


\begin{proof}
We obtain with the Correspondence Lemma:
\begin{eqnarray*}
&&  J(x,\pi^P) =  \\
&=&\int \Eop_{x,y}^{\pi^P} \left[ \int_0^{\infty} e^{-\beta t} \int_A c(Y_t, a) ~\pi_t(da) ~dt \right] Q_0(dy|x), \\
   &=&  \int \Eop_{x,y}^{\pi^P} \left[ \sum_{k=0}^\infty \int_{T_k}^{T_{k+1}} e^{-\beta t} \int_A c(Y_t, a) ~\pi_k^P(H_k,t-T_k)(da) ~dt \right] Q_0(dy|x) \\
   &=&  \int {\Eop}_{x,y}^{\pi^D} \left[ \sum_{k=0}^\infty \int_{T_k}^{T_{k+1}} e^{-\beta t} \int_A c(Y_t, a) ~\pi_k^D(H_k)(t-T_k)(da) ~dt \right] Q_0(dy|x) \\
   &=&  \int {\Eop}_{x,y}^{\pi^D} \left[ \sum_{k=0}^\infty e^{-\beta T_k}\Eop_{\hat{Y}_k}^{\pi^D}\left[ \int\limits_{T_k}^{T_{k+1}} \!\!\!\!e^{-\beta (t-T_k)} \int_A c(Y_t, a) ~\pi_k^D(H_k)(t-T_k)(da) ~dt \Big| H_k, \hat{Y}_k, T_k\right] \right] Q_0(dy|x)\\
    &=&  \int {\Eop}_{x,y}^{\pi^D} \left[ \sum_{k=0}^\infty e^{-\beta T_k} g(\hat{Y}_k, \pi_k^D(H_k))\right]  Q_0(dy|x)\\
   &=&   \widetilde{\Eop}_{x}^{\pi^D} \left[ \sum_{k=0}^\infty g(\hat{Y}_k, \pi_k^D(H_k)) \right]
\end{eqnarray*}
which is exactly the right hand side. Note that in the last sum there is no additional discount factor. The term $e^{-\beta T_k}$ which appears in the last but one equation is now part of the probability measure $\tilde{\Prob}_x^{\pi^D}$ (see Definition \ref{def:translawMDP}) which is substochastic.
\end{proof}

In the remaining section we explain how this POMDP can be transformed into a completely observable MDP which will then be solved in the next section.
We make some further simplifying assumptions. The first one implies that we later  get a finite dimensional filter for our problem.\\

{\em Assumption:}
\begin{description}
  \item[(B1)] There exists a finite subset $E^0\subset \R^d$ with $E^0=\{y^1,\ldots,y^d\}$ such that for all $y\in \R^d$ and $a\in A$: $Q^A(E^0 | y,a)=1$ and $Q_0$ is also concentrated on $E^0$.
  \item[(B2)] $Q_\epsilon$ has a bounded density $f_\epsilon$ with respect to some $\sigma$-finite measure $\nu$.
\end{description}

\vspace*{0.4cm}

Under Assumption (B1)-(B2) our substochastic transition law in Definition \ref{def:translawMDP} has a density with respect to the product of Lebesgue measure, $\nu$ and the counting measure given by
\begin{eqnarray*}
&&\tilde{q}(s,y',x|y,r) \\
&=& \exp\big( -\Gamma^{r}(y,s)\big)  f(x-y') \int_A\lambda^A\big(\Phi^r(y,s),a\big)  Q^A\big(y' | \Phi^r(y,s),a\big) r_s(da).
\end{eqnarray*}
In order to reduce problem \eqref{eq:ValueFunctionControlModelTimeDiscrete} to an MDP with complete observation we have to replace the unobservable state by its conditional distribution given the history so far.  The computation of this conditional distribution can be done recursively. This is a Bayesian updating procedure. The conditional distribution is also called \emph{filter}. In what follows we will introduce the updating-operator $\Psi: \mathcal{P}(E^0)\times \mathcal{R}\times \R_+\times \R^d\to \mathcal{P}(E^0) $ which maps the conditional distribution $\rho$ of the previous step, the relaxed control $r$ which is chosen  and the received new information (this is the time point of the jump $s$ and the observation $x$) onto the new conditional distribution. The updating operator essentially follows from  Bayes' formula. We will later show in Lemma \ref{lem:bayes} that the recursive computation which is done here really yields the conditional distribution of the unobservable state. The updating-operator is defined as
\begin{equation}
    \Psi(\rho,r,s,x)(y') := \frac{ \sum_{y\in E^0} \tilde{q}(s,y',x|y,r) \rho(y)}{\sum_{\hat{y}\in E^0} \sum_{y\in E^0} \tilde{q}(s,\hat{y},x|y,r) \rho(y) }.
\end{equation}
When we denote for the history $h_n=(x_0,r_0,s_1,x_1,\ldots ,r_{n-1},s_n,x_n)$ up to time $T_n$ the following distributions
\begin{eqnarray*}
  \mu_0(x_0) &:=& Q_0(\cdot|x_0), \\
  \mu_n(\cdot|h_n)=\mu_n(\cdot | h_{n-1},r_{n-1},s_n,x_n) &:=& \Psi\big(\mu_{n-1}(\cdot|h_{n-1}),r_{n-1},s_{n},x_{n}\big),
\end{eqnarray*}
then we obtain the necessary quantity to reduce the problem to an MDP with complete observation. The previous equation is also called {\em filter equation.}

\begin{definition}\label{def:filteredMDP}
Consider the following discrete-time filtered Markov Decision Model with complete observation:
\begin{enumerate}
  \item[(i)] The state space of this process is given by $\mathcal{P}(E^0)$. A typical state is denoted by $\rho$. The interpretation of $\rho$ is that it is the current  conditional probability of the unobservable state.
  \item[(ii)] The action space is given by $\mathcal{R}$. A typical action is denoted by $r$.
  \item[(iii)] The transition kernel $\hat{Q}$ from $ \mathcal{P}(E^0)\times\mathcal{R}$ to $\mathcal{P}(E^0)$ is for all $r\in\mathcal{R}$ and $ \rho\in \mathcal{P}(E^0)$ given by
\begin{eqnarray}
\hat{Q}(B | \rho, r) &=& \int_{\R_+} \int_{\R^d} \sum_{y\in E^0} 1_B\big(\Psi(\rho,r,s,x)\big) \tilde{q}^{SX}(s, x|y,r)\nu(dx)ds\rho(y)
\end{eqnarray}
where $\tilde{q}^{SX}(s, x|y,r) :=  \sum_{y'\in E^0} \tilde{q}(s,y',x|y,r)$.
 \item[(iv)] The  one-stage cost  is given by
 \begin{equation}\label{eq:g}
\hat{g}(\rho, r) := \sum_{y\in E^0} g(y,r) \rho(y).
\end{equation}
\end{enumerate}
\end{definition}
The corresponding filtered MDP is denoted by $(\mu_n)$. Policies $\pi=(f_0,f_1,\ldots)$ are here defined as Markovian decision rules $f:\mathcal{P}(E^0)\to \mathcal{R}$. We denote by $\Pi$ the set of all decision rules. Every $\pi =(f_0,f_1,\ldots)\in \Pi^\infty$ can be seen as a special policy $\pi^D\in\Pi^D$ by setting
\begin{equation}\label{eq:policy} \pi_n^D(h_n):= f_n(\mu_n(\cdot|h_n)).\end{equation}
Note that in MDP theory it is well-known that we can restrict the optimization to Markovian policies (see \cite{hin}, Theorem 18.4).

An initial distribution $\rho$ on $\mathcal{P}(E^0)$ together with the transition kernels $\hat{Q}$ define a probability measure $\hat{\Prob}_\rho$.
We denote the \emph{cost of policy} $\pi\in\Pi^\infty$ under an initial distribution $\rho \in \mathcal{P}(E^0)$ by
\begin{equation}\label{eq:CostOfStrategy2}
V(\rho,  \pi) := {\hat{\Eop}}_{\rho}^\pi \left[ \sum_{n=0}^{\infty} \hat{g}\big(\mu_n,f_n(\mu_n)\big) \right].
\end{equation}
The \emph{value function} of the control model gives the minimal cost under an initial distribution $\rho \in \mathcal{P}(E^0)$ and is defined as
\begin{equation}\label{eq:ValueFunctionfilter}
V(\rho) := \inf_{\pi \in \Pi^\infty} V(\rho, \pi) \hspace{0.5cm} \mbox{ for all } ~\rho \in \mathcal{P}(E^0).
\end{equation}
The \emph{optimization problem} is then to find, for $\rho \in \mathcal{P}(E^0)$, a policy $\pi^\star \in \Pi^\infty$ such that  we get
\begin{equation}
V(\rho) = V(\rho,\pi^\star).\end{equation}

\begin{lemma}\label{lem:bayes}  Let $x \in \R^d$ be an initial observation, $\pi \in \Pi^\infty$ and $\pi^D$ given by \eqref{eq:policy}. Then, it holds
$$V(Q_0(\cdot|x), \pi) = J(x, \pi^D).$$
\end{lemma}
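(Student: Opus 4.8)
The plan is to reduce the claim to the discrete-time cost $\tilde J$ and then to identify the filter $\mu_n$ as the genuine conditional law of the hidden state. First I would invoke Lemma \ref{prop:EquivalenceTimeContToTimeDiscIOptProb} (together with the Correspondence Lemma \ref{theo:CorrespondenceTheorem}), which gives $J(x,\pi^D)=\tilde J(x,\pi^D)=\widetilde{\Eop}_x^{\pi^D}\big[\sum_{k=0}^\infty g(\hat Y_k,\pi_k^D(H_k))\big]$. Since $c\ge 0$ forces $g\ge 0$, Tonelli lets me treat the infinite sum termwise throughout, so it suffices to compare the two series term by term.

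The core step is to prove by induction on $n$ that $\mu_n(\cdot\mid H_n)$ is a regular conditional distribution of $\hat Y_n$ given the observed history $H_n$ under $\tilde\Prob_x^{\pi^D}$. The base case $n=0$ holds because $\mu_0=Q_0(\cdot\mid x)$ is by definition the conditional law of $\hat Y_0$ given $X_0=x$. For the inductive step I would compute, for $y'\in E^0$, the joint conditional density of $(S_{n+1},\hat Y_{n+1},X_{n+1})$ given $H_n$: conditioning first on $(H_n,\hat Y_n)$ and using that the POMDP transition law $\tilde Q$ depends only on $\hat Y_n$ and on the $H_n$-measurable control $r=\pi_n^D(H_n)=f_n(\mu_n)$, this density is $\tilde q(s,y',x\mid \hat Y_n,r)$; averaging over $\hat Y_n$ with the inductive hypothesis yields $\sum_{y\in E^0}\tilde q(s,y',x\mid y,r)\,\mu_n(y)$. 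Bayes' formula applied to $H_{n+1}=(H_n,r,S_{n+1},X_{n+1})$ then gives that the conditional law of $\hat Y_{n+1}$ is exactly the normalized quantity $\Psi(\mu_n,r,S_{n+1},X_{n+1})=\mu_{n+1}$, closing the induction.

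With the filter identity in hand, conditioning each summand on $H_k$ and using that $\pi_k^D(H_k)=f_k(\mu_k)$ is $H_k$-measurable together with $\hat g(\rho,r)=\sum_{y}g(y,r)\rho(y)$ gives $\widetilde{\Eop}_x^{\pi^D}[g(\hat Y_k,\pi_k^D(H_k))]=\widetilde{\Eop}_x^{\pi^D}[\hat g(\mu_k,f_k(\mu_k))]$, so that $\tilde J(x,\pi^D)=\widetilde{\Eop}_x^{\pi^D}\big[\sum_k \hat g(\mu_k,f_k(\mu_k))\big]$. It then remains to match this with $V(Q_0(\cdot\mid x),\pi)=\hat{\Eop}_{Q_0(\cdot\mid x)}^\pi\big[\sum_n \hat g(\mu_n,f_n(\mu_n))\big]$. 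For this I would show that, under $\tilde\Prob_x^{\pi^D}$, the sequence $(\mu_n)$ is a Markov chain with transition kernel $\hat Q$ driven by the Markov rule $(f_n)$: for a test set $B$, combining the marginal law of the new data $(S_{n+1},X_{n+1})$ given $H_n$, namely $\sum_{y}\mu_n(y)\,\tilde q^{SX}(s,x\mid y,r)\,ds\,\nu(dx)$, with the deterministic update $\mu_{n+1}=\Psi(\mu_n,r,S_{n+1},X_{n+1})$ reproduces precisely the definition of $\hat Q(B\mid\mu_n,f_n(\mu_n))$. Since $\mu_0=Q_0(\cdot\mid x)$ matches the initial state of the filtered MDP, the laws of $(\mu_n)$ under the two measures coincide, and hence so do the expectations of the path functional $\sum_n\hat g(\mu_n,f_n(\mu_n))$.

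The main obstacle I anticipate is the measure-theoretic bookkeeping in the inductive step: carefully justifying the interchange of conditioning and marginalization over the hidden state $\hat Y_n$ while the control $r$ itself depends on the observed history, and verifying that the filter $\mu_n$ genuinely depends only on $H_n$ and not on the hidden trajectory, which is what makes it a legitimate state of the completely observable MDP. Once the Markov and conditional-independence structure of $\tilde Q$ is pinned down precisely, the Bayes computation and the kernel-matching are essentially forced by the definitions of $\Psi$ and $\hat Q$.
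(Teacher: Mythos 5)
Your proposal is correct, and all of its ingredients (the inductive Bayes computation, the matching of the marginal law of $(S_{n+1},X_{n+1})$ with $\tilde{q}^{SX}$, the identification of the update with $\hat{Q}$ from Definition \ref{def:filteredMDP}, Tonelli for the termwise treatment, and the explicit reduction $J=\tilde{J}$ via Lemmas \ref{theo:CorrespondenceTheorem} and \ref{prop:EquivalenceTimeContToTimeDiscIOptProb}) are exactly the right ones; but you organize the argument differently from the paper. You split the proof into two probabilistic facts proved under $\tilde{\Prob}_x^{\pi^D}$: (i) $\mu_n$ is a regular conditional distribution of $\hat{Y}_n$ given $H_n$, and (ii) the filter process $(\mu_n)$ is a chain with kernel $\hat{Q}(\cdot\mid\cdot,f_n(\cdot))$, whose law therefore coincides with that of the state process under $\hat{\Prob}_{Q_0}^\pi$. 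The paper instead proves by induction a \emph{single} unconditional identity comparing the two measures directly, namely $\tilde{\Eop}_x^{\pi^D}\big[v(H_n,\hat{Y}_n)\big]=\hat{\Eop}_{Q_0}^{\pi}\big[v'(H_n,\mu_n)\big]$ for arbitrary measurable $v$, with $v'(h_n,\rho)=\sum_{y}v(h_n,y)\rho(y)$, and then specializes $v(H_n,\hat{Y}_n)=g(\hat{Y}_n,\pi_n^D(H_n))$; your facts (i) and (ii) drop out of this identity as corollaries (the paper's remark after the lemma recovers (i) by taking $v=1_{B\times C}$) rather than serving as ingredients. The trade-off: your route is the classical filtering/POMDP decomposition and makes the probabilistic meaning of $\mu_n$ transparent, but it forces you to make sense of conditional distributions and the tower property under the \emph{substochastic} measure $\tilde{\Prob}_x^{\pi^D}$ — this does work, because the discount factor $e^{-\beta T_n}$ hidden in $\tilde{q}$ is $H_n$-measurable and hence cancels in the Bayes normalization defining $\Psi$, but that point deserves to be stated explicitly and is precisely the "measure-theoretic bookkeeping" you flag as the main obstacle. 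The paper's combined induction only ever compares unconditional expectations under the two measures, so it sidesteps conditioning under a defective measure entirely, at the price of proving a statement that looks less intuitive at first reading.
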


\begin{proof}
Similar proofs can be found  in \cite{br11},  Theorem 5.3.2.\ or \cite{br17} Theorem 3.2. We first show that for any measurable $v : \mathcal{H}_n\times \R^d \to\R$ (provided the expectations exist)
\begin{eqnarray}\nonumber&&\tilde{\Eop}_x^{\pi^D}\Big[ v(X_0,R_0,S_1,X_1,\ldots ,R_{n-1},S_n,X_n,\hat{Y}_n)\Big] \\
\label{eq:bayes1}&=& \hat{\Eop}_{Q_0}^{\pi}\Big[ v'(X_0,R_0,S_1,X_1,\ldots ,R_{n-1},S_n,X_n,\mu_n)\Big]
\end{eqnarray}
where  $R_n := f_n(\mu_n(\cdot|H_n))$ and $v'(h_n,\rho) := \sum_{y\in E^0} v(h_n,y) \rho(y)$. This can be shown by induction on $n$. For $n=0$ we have
\begin{eqnarray*}
  \tilde{\Eop}_x^{\pi^D}\Big[ v(X_0,\hat{Y}_0)\Big]  &=& \sum_{y\in E^0} v(x,y)  Q_0(y|x) \\
  \hat{\Eop}_{Q_0(\cdot|x)}^{\pi}\Big[ v'(X_0,\mu_0)\Big] &=& \sum_{y\in E^0} v(x,y)  Q_0(y|x)
\end{eqnarray*}
so obviously both sides are equal. Now suppose the statement is true for $n-1$ and fix $H_{n-1}=h_{n-1}$. The left-hand side of \eqref{eq:bayes1} can be written as
\begin{eqnarray*}
   &&\tilde{\Eop}_x^{\pi^D}\Big[ v(h_{n-1},R_{n-1},S_n,X_n,\hat{Y}_n)\Big]  \\
   &=& \sum_{y_{n-1}} \mu_{n-1}\big(y_{n-1} | h_{n-1}\big) \int_{\R^d}\int_{\R_+}\sum_{y_n} \tilde{q}\big(s_n,y_n,x_n|y_{n-1},\pi_{n-1}^D(h_{n-1})\big) \\
   && \hspace*{1cm} v\big(h_{n-1}, \pi_{n-1}^D(h_{n-1}),s_n,x_n,y_n)  d s_n \nu(dx_n).
\end{eqnarray*}
The right-hand side can be written as (where we use $\mu_n = \Psi(\mu_{n-1},f_{n-1}(\mu_{n-1}),s_n,x_n)$ in the second equation)
\begin{eqnarray*}
   &&\hat{\Eop}_{Q_0}^{\pi}\Big[ v'(h_{n-1},R_{n-1},S_n,X_n,\mu_n)\Big]  \\
   &=& \sum_{y_{n-1}} \mu_{n-1}\big(y_{n-1} | h_{n-1}\big) \int_{\R_+}\int_{\R^d} \tilde{q}^{SX}\big(s_n,x_n|y_{n-1},f_{n-1}(\mu_{n-1})\big) \\
   && \hspace*{1cm} v'\big(h_{n-1}, f_{n-1}(\mu_{n-1}),s_n,x_n,\mu_n(\cdot|h_{n-1},f_{n-1},s_n,x_n)\big)   \nu(dx_n) ds_n\\
   &=& \sum_{y_{n-1}} \mu_{n-1}\big(y_{n-1} | h_{n-1}\big) \int_{\R_+} \int_{\R^d}\tilde{q}^{SX}\big(s_n,x_n|y_{n-1},f_{n-1}(\mu_{n-1})\big) \\
    && \hspace*{1cm} \sum_{y_n} v\big(h_{n-1}, f_{n-1}(\mu_{n-1}),s_n,x_n,y_n) \\
    && \hspace*{1cm} \frac{ \sum_{y} \tilde{q}(s_n,y_n,x_n|y,f_{n-1}(\mu_{n-1})) \mu_{n-1}(y|h_{n-1}) }{\sum_{y'}\sum_{y} \tilde{q}(s_n,y',x_n|y,f_{n-1}(\mu_{n-1})) \mu_{n-1}(y|h_{n-1})}   \nu(dx_n) ds_n.
\end{eqnarray*}
Note that we have
\begin{eqnarray*}
   && \sum_{y'}\sum_{y} \tilde{q}(s_n,y',x_n|y,f_{n-1}(\mu_{n-1})) \mu_{n-1}(y|h_{n-1})\\
   &=& \sum_{y} \tilde{q}^{SX}(s_n,x_n|y,f_{n-1}(\mu_{n-1})) \mu_{n-1}(y|h_{n-1}).
\end{eqnarray*}
Applying Fubini's Theorem to interchange the integrals we see that
\begin{eqnarray*}
   && \hat{\Eop}_{Q_0}^{\pi}\Big[ v'(h_{n-1},R_{n-1},S_n,X_n,\mu_n)\Big]  \\
   &=&   \int_{\R^d}\int_{\R_+} \sum_{y_n} v\big(h_{n-1}, f_{n-1}(\mu_{n-1}),s_n,x_n,y_n)\\
    && \sum_{y} \tilde{q}(s_n,y_n,x_n|y,f_{n-1}(\mu_{n-1})) \mu_{n-1}(y|h_{n-1})  ds_n \nu(dx_n) .
\end{eqnarray*}
and thus both sides are equal. When we choose
$$ v(H_n,\hat{Y}_n) = g\big(\hat{Y}_n,\pi_n^D(H_n)\big)$$ we obtain that
$$ \tilde{\Eop}_x^{\pi^D}\Big[ g\big(\hat{Y}_n,\pi_n^D(H_n)\big)\Big] = \hat{\Eop}_{Q_0}^{\pi}\Big[ \hat{g}\big(\mu_n,f_n(\mu_n(\cdot|H_n))\big)\Big]$$
where we use definition \eqref{eq:g} of $\hat{g}$ on the right-hand side which implies the statement.
\end{proof}

\begin{remark}
When we choose $v=1_{B\times C}$ in the previous proof, we obtain
\begin{eqnarray*}
   && \tilde{\Prob}_x^{\pi^D} \Big( (X_0,R_0,S_1,X_1,\ldots ,R_{n-1},S_n,X_n)\in B,  \hat{Y}_n\in C\Big) \\
   &=& \hat{\Eop}_{Q_0}^{\pi} \Big[ 1_B(X_0,R_0,S_1,X_1,\ldots ,R_{n-1},S_n,X_n)\cdot \mu_n(C| X_0,R_0,S_1,X_1,\ldots ,R_{n-1},S_n,X_n)\Big]
\end{eqnarray*}
which implies that $\mu_n$ is a conditional $\tilde{\Prob}_x^{\pi^D}$-distribution of $\hat{Y}_n$ given the previous history  \linebreak $(X_0,R_0,S_1,X_1,\ldots ,R_{n-1},S_n,X_n)$.
\end{remark}

\begin{remark}
If $\lambda^A$ and $Q^A$ are not controlled i.e. do not depend on $A$ we obtain the following special substochastic transition kernel:
\begin{eqnarray*}
&&\tilde{q}(s,y',x|y,r) \\
&=& \exp\Big( -\beta t - \int_0^t \lambda(\Phi^r(y,s))ds\Big) f(x-y') \lambda\big(\Phi^r(y,t)\big)  Q\big(y' | \Phi^r(y,t)\big)
\end{eqnarray*}
i.e. the updating-operator $\Psi$ depends on $r$ only through $\Phi^r(y,\cdot)$. This observation will be crucial later on (see Remark \ref{rem:Quncontrolled}).
\end{remark}

\section{Optimality Equation and Existence of Optimal Policies}\label{sec:oe}
In this section we will formulate our main theorem which states existence of an optimal policy for the original problem \eqref{eq:ValueFunctionControlModelTimeContinuous} and provides an optimality equation for the value function. The critical point here is to find the right continuity and compactness conditions in order to show the existence of optimal policies. In particular we have to replace the filter by a regularized version in the general case. Thus, we first  make the following assumption.\\

{\em Assumption}:
\begin{description}
  \item[(C4)]  The mapping $r \mapsto \Phi^r(y,t)$ is continuous for all $y \in E^0$ and $t \geq 0$.
  \item[(C5)] The cost function $c:\R^d \times A \to \R_+$ is lower semi-continuous with respect to the product topology.
\end{description}

The proof of the following five lemmas can be found in the appendix.

\begin{lemma}\label{le:LambdaRIsContinuous}
Under Assumptions (C1),(C2),(C4), the mapping $r \mapsto \Gamma^r(y,t)$ is continuous for all $y \in E^0$ and $t \geq 0$.
\end{lemma}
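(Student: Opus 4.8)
The plan is to establish sequential (equivalently, net) continuity directly from the defining convergence of the Young topology. Fix $y \in E^0$ and $t \geq 0$. Since $\Gamma^r(y,t) = \beta t + \int_0^t \int_A \lambda^A(\Phi^r(y,u),a)\, r_u(da)\,du$ and the term $\beta t$ does not depend on $r$, it suffices to prove that $r \mapsto G(r) := \int_0^t \int_A \lambda^A(\Phi^r(y,u),a)\, r_u(da)\,du$ is continuous. Under (C1) the restriction of $\mathcal{R}$ to $[0,t]$ is compact and metrizable in the Young topology, so I may take a sequence $r^k \to r$ and show $G(r^k) \to G(r)$ (the argument below is in any case valid verbatim for a general net). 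The difficulty is that the dependence on $r$ is twofold: both the controlling measure $r^k_u(da)$ and the integrand $\lambda^A(\Phi^{r^k}(y,u),\cdot)$ vary with $k$, so $G$ is not a linear functional of $r$ and the Young convergence cannot be invoked directly.

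To decouple the two effects I would write $|G(r^k) - G(r)| \le I_k + II_k$, where
\begin{align*}
I_k &:= \int_0^t \int_A \big| \lambda^A(\Phi^{r^k}(y,u),a) - \lambda^A(\Phi^{r}(y,u),a)\big|\, r^k_u(da)\,du, \\
II_k &:= \Big| \int_0^t \int_A \lambda^A(\Phi^{r}(y,u),a)\, r^k_u(da)\,du - \int_0^t \int_A \lambda^A(\Phi^{r}(y,u),a)\, r_u(da)\,du \Big|.
\end{align*}
For $II_k$ the integrand $h(u,a) := 1_{[0,t]}(u)\,\lambda^A(\Phi^r(y,u),a)$ is now fixed: it is continuous in $a$ by (C2), measurable in $u$ (since $\Phi^r(y,\cdot)$ and $\lambda^A$ are continuous), and bounded by $\bar\lambda$, hence integrable over $[0,t]$. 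Such an $h$ is an admissible test function for the Young topology, so $r^k \to r$ gives $II_k \to 0$ immediately.

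The main work is the term $I_k$, which I would control by dominated convergence. I bound $I_k \le \int_0^t \delta_k(u)\,du$ with $\delta_k(u) := \sup_{a \in A} | \lambda^A(\Phi^{r^k}(y,u),a) - \lambda^A(\Phi^{r}(y,u),a)|$, using that each $r^k_u$ is a probability measure. Fix $u \in [0,t]$: by (C4), $\Phi^{r^k}(y,u) \to \Phi^r(y,u)$ in $\R^d$, so these points together with their limit lie in a compact set $K_u \subset \R^d$. Since $A$ is compact by (C1), $\lambda^A$ is uniformly continuous on $K_u \times A$, whence $\delta_k(u) \to 0$. Moreover $\delta_k(u) \le 2\bar\lambda$ for all $k,u$ by (C2), which is integrable on $[0,t]$, so dominated convergence yields $I_k \to 0$.

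Combining the two bounds gives $G(r^k) \to G(r)$, and adding back the constant $\beta t$ shows that $r \mapsto \Gamma^r(y,t)$ is continuous, as claimed. I expect the only genuinely delicate point to be the pointwise convergence $\delta_k(u) \to 0$: it is not enough that $\lambda^A$ is continuous and that $\Phi^{r^k}(y,u)\to\Phi^r(y,u)$, one must upgrade to convergence that is uniform in $a$, which is precisely where the compactness of $A$ from (C1) enters to pass from continuity to uniform continuity of $\lambda^A$ on a compact neighborhood of the orbit. The boundedness in (C2) then supplies the integrable dominating function needed to pass the limit under the integral.
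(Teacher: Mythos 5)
Your proof is correct and follows essentially the same route as the paper: the same splitting of $\Lambda^{r^k}(y,t)-\Lambda^r(y,t)$ into a term with varying integrand against $r^k$ (handled by a uniform-in-$a$ estimate plus dominated convergence) and a term with fixed integrand (handled by the Young-topology test-function characterization). The only cosmetic difference is that the paper delegates the pointwise convergence of $\sup_{a\in A}|\lambda^A(\Phi^{r^k}(y,u),a)-\lambda^A(\Phi^r(y,u),a)|$ to its auxiliary Lemma \ref{le:helpA1}, whereas you prove that step directly via uniform continuity on a compact set, which is the same argument.
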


\begin{lemma}\label{theo:GPrimeLowerSemiContinuous} Under Assumptions (C1),(C2),(C4),(C5) the one-step cost function $(\rho,r) \mapsto \hat{g}(\rho,r)$ of the derived filtered model is lower semi-continuous.
\end{lemma}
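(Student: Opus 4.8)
The plan is to exploit that $E^0=\{y^1,\dots,y^d\}$ is finite, so that
$\hat g(\rho,r)=\sum_{y\in E^0} g(y,r)\rho(y)$
is a \emph{finite} sum in which $\rho\mapsto\rho(y)$ is continuous and nonnegative on $\mathcal P(E^0)$ (weak convergence on a finite set is just convergence of the individual weights). A finite sum of lower semi-continuous functions is again lower semi-continuous, and the product of a nonnegative continuous function of $\rho$ with a nonnegative lower semi-continuous function of $r$ is jointly lower semi-continuous (by a one-line subsequence argument using nonnegativity). Hence the whole statement reduces to showing that, for each fixed $y\in E^0$, the map $r\mapsto g(y,r)$ is lower semi-continuous on $\mathcal R$ in the Young topology.

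To prove that, I would approximate the cost from below. Since $c$ is nonnegative and lower semi-continuous by (C5), there is an increasing sequence of \emph{bounded} Lipschitz functions $c_k:\R^d\times A\to\R_+$ with $c_k\uparrow c$ pointwise (e.g.\ truncations of the inf-convolutions $\inf_{(z',a')}[c(z',a')+k\,d((z,a),(z',a'))]$, which are $k$-Lipschitz, bounded by $k$, and increase to $c$). Set $g_k(y,r):=\int_0^\infty e^{-\Gamma^r(y,t)}\int_A c_k(\Phi^r(y,t),a)\,r_t(da)\,dt$. Two facts then finish the reduction: (a) by monotone convergence, first in the inner measure $r_t$ and then in $t$ (everything being nonnegative), $g_k(y,r)\uparrow g(y,r)$ for every fixed $(y,r)$, so $g(y,\cdot)=\sup_k g_k(y,\cdot)$; and (b) each $g_k(y,\cdot)$ is continuous. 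Since a pointwise supremum of continuous functions is lower semi-continuous, (a) together with (b) gives that $g(y,\cdot)$ is lower semi-continuous.

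The heart of the matter is step (b), and it is where the \emph{double} dependence of the integrand on $r$ --- through the relaxed measure $r_t$ and through $\Phi^r,\Gamma^r$ --- must be untangled; this is the main obstacle. For $r^n\to r$ in the Young topology write the integrand as $\phi^n(t,a)=e^{-\Gamma^{r^n}(y,t)}c_k(\Phi^{r^n}(y,t),a)$ and its candidate limit as $\phi(t,a)=e^{-\Gamma^{r}(y,t)}c_k(\Phi^{r}(y,t),a)$, and split
$\int\!\int\phi^n r^n_t(da)\,dt-\int\!\int\phi\,r_t(da)\,dt=(\mathrm I)+(\mathrm{II})$,
where $(\mathrm I)=\int\!\int(\phi^n-\phi)\,r^n_t(da)\,dt$ and $(\mathrm{II})=\int\!\int\phi\,(r^n_t-r_t)(da)\,dt$. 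Term $(\mathrm{II})$ tends to $0$ by the definition of convergence in the Young topology (see the appendix), because $\phi$ is a fixed Carath\'eodory integrand: it is continuous in $a$ (as $c_k$ is), measurable in $t$, and dominated by the integrable envelope $\|c_k\|_\infty\,e^{-(\beta+\underline{\lambda})t}$, using the bound $\Gamma^r(y,t)\ge(\beta+\underline{\lambda})t$ that comes from the lower bound $\underline{\lambda}>0$ in (C2).

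For term $(\mathrm I)$ I would estimate $\sup_a|\phi^n(t,a)-\phi(t,a)|$ by combining $|e^{-\Gamma^{r^n}(y,t)}-e^{-\Gamma^r(y,t)}|\to0$ (from Lemma \ref{le:LambdaRIsContinuous}) with the \emph{uniform-in-}$a$ bound $|c_k(\Phi^{r^n}(y,t),a)-c_k(\Phi^{r}(y,t),a)|\le L_k\,|\Phi^{r^n}(y,t)-\Phi^r(y,t)|\to0$, which is exactly why the Lipschitz regularization is chosen and which uses (C4). This supremum converges to $0$ for each $t$ and is dominated by $2\|c_k\|_\infty\,e^{-(\beta+\underline{\lambda})t}$, so dominated convergence and $r^n_t(A)=1$ give $(\mathrm I)\le\int_0^\infty\sup_a|\phi^n-\phi|\,dt\to0$. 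Hence $g_k(y,r^n)\to g_k(y,r)$, establishing (b). The only remaining routine point is the product inequality $\liminf_n g(y,r_n)\rho_n(y)\ge g(y,r)\rho(y)$ announced in the first paragraph: it follows from nonnegativity by passing to a subsequence realizing the $\liminf$ and noting that every subsequential limit of $g(y,r_n)$ dominates $\liminf_n g(y,r_n)\ge g(y,r)$, after which $\liminf_n\hat g(\rho_n,r_n)\ge\sum_{y\in E^0}g(y,r)\rho(y)=\hat g(\rho,r)$ by superadditivity of $\liminf$ over the finite sum.
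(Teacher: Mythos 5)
Your proposal is correct and follows essentially the same route as the paper's proof: approximate the lower semi-continuous cost $c$ monotonically from below by bounded regular functions, prove continuity of the resulting one-stage cost for each approximant by splitting the difference along $r^n\to r$ into a term controlled by uniform-in-$a$ estimates (using (C2), (C4) and Lemma \ref{le:LambdaRIsContinuous}, with the dominating function $e^{-\beta t}$ times a constant) plus a term that vanishes by the very definition of Young-topology convergence for a fixed Carath\'eodory integrand, and then conclude lower semi-continuity of $\hat g$ as an increasing limit (hence pointwise supremum) of continuous functions. The only differences are cosmetic: the paper approximates by continuous bounded functions (citing Lemma 7.14 of Bertsekas--Shreve) and handles the uniform-in-$a$ step via the compactness result Lemma \ref{le:helpA1}, where you use Lipschitz inf-convolutions and their Lipschitz constants instead; and the paper establishes joint continuity in $(\rho,r)$ directly at the approximant level, whereas you first fix $y\in E^0$ and recover joint lower semi-continuity at the end through the (correct) product and finite-sum arguments.
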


\begin{lemma}\label{lem:Continuous2} Under Assumptions (C1)-(C4),(B1),(B2) we have that $r\mapsto \int_{\R_+} \tilde{q}(s,y',x|y,r)ds$ is continuous for all $y,y' \in E^0$ and $x\in \R^d$.
\end{lemma}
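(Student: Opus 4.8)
The plan is to establish sequential continuity: fix $y,y'\in E^0$ and $x\in\R^d$, take a sequence $r^n\to r$ in the Young topology, and show the integrals converge (the Young topology being metrizable, this gives continuity; convergence is understood as in the appendix). Since the factor $f(x-y')$ is a bounded constant by (B2), it factors out and it suffices to treat
\[
g^r(s):=\exp\big(-\Gamma^r(y,s)\big)\int_A \lambda^A\big(\Phi^r(y,s),a\big)\,Q^A\big(y'\mid \Phi^r(y,s),a\big)\,r_s(da)
\]
and prove $\int_0^\infty g^{r^n}(s)\,ds\to\int_0^\infty g^r(s)\,ds$. First I would record two preliminaries. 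By (C2), $Q^A(y'\mid\cdot)\le 1$ and $\Gamma^r(y,s)\ge(\beta+\underline\lambda)s$, so $|g^r(s)|\le\bar\lambda\,e^{-(\beta+\underline\lambda)s}$ \emph{uniformly in $r$}, which supplies an $L^1(\R_+)$ dominating function. Second, writing $G_{y'}(z,a):=\lambda^A(z,a)\,Q^A(y'\mid z,a)$, I claim $G_{y'}$ is continuous and bounded on $\R^d\times A$: because $E^0$ is finite by (B1), one may choose a continuous bounded $v_{y'}$ with $v_{y'}(y')=1$ and $v_{y'}\equiv 0$ on $E^0\setminus\{y'\}$, so weak continuity (C3) makes $(z,a)\mapsto\int v_{y'}\,dQ^A=Q^A(y'\mid z,a)$ continuous, and multiplying by the continuous bounded $\lambda^A$ (C2) gives the claim, with $|G_{y'}|\le\bar\lambda$.

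The central device is to \emph{decouple} the three occurrences of $r$ in $g^r$: the exponent $\Gamma^r$, the drift $\Phi^r$ inside $G_{y'}$, and the relaxed measure $r_s(da)$. I would introduce the frozen test function
\[
\phi(s,a):=\exp\big(-\Gamma^r(y,s)\big)\,G_{y'}\big(\Phi^r(y,s),a\big),
\]
using the limit $r$ in the exponent and drift, but pairing it with $r^n$ in the measure. This $\phi$ is Carath\'eodory (for fixed $s$ it is continuous in $a$ since $\Phi^r(y,s)$ is a fixed point; for fixed $a$ it is measurable in $s$) and dominated by $\bar\lambda\,e^{-(\beta+\underline\lambda)s}\in L^1$, hence an admissible Young test function. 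Therefore, by the defining property of Young convergence,
\[
\int_0^\infty\!\int_A\phi(s,a)\,r^n_s(da)\,ds\;\longrightarrow\;\int_0^\infty\!\int_A\phi(s,a)\,r_s(da)\,ds=\int_0^\infty g^r(s)\,ds.
\]

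It then remains to control the discrepancy between using $r^n$ everywhere and using it only in the measure. Since the exponential does not depend on $a$, one has $g^{r^n}(s)-\int_A\phi(s,a)r^n_s(da)=\int_A\Delta^n(s,a)\,r^n_s(da)$ with $\Delta^n(s,a):=\exp(-\Gamma^{r^n}(y,s))G_{y'}(\Phi^{r^n}(y,s),a)-\exp(-\Gamma^r(y,s))G_{y'}(\Phi^r(y,s),a)$, so the discrepancy is bounded by $\int_0^\infty\sup_{a}|\Delta^n(s,a)|\,ds$. For fixed $s$, Lemma \ref{le:LambdaRIsContinuous} gives $\Gamma^{r^n}(y,s)\to\Gamma^r(y,s)$ and (C4) gives $\Phi^{r^n}(y,s)\to\Phi^r(y,s)$; combined with the compactness of $A$ (C1) and uniform continuity of $G_{y'}$ on the compact set $(\{\Phi^{r^n}(y,s)\}_n\cup\{\Phi^r(y,s)\})\times A$, this yields $\sup_a|\Delta^n(s,a)|\to 0$ pointwise in $s$. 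As $\sup_a|\Delta^n(s,a)|\le 2\bar\lambda\,e^{-(\beta+\underline\lambda)s}\in L^1$, dominated convergence makes this error vanish, and adding the two limits proves the lemma.

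The step I expect to require the most care is the uniform-in-$a$ convergence $\sup_a|\Delta^n(s,a)|\to0$. Young convergence gives no pointwise convergence of the measures $r^n_s$, so the entire $r^n$-dependence of the integrand must be split into a part absorbed by the frozen Young test function $\phi$ and a remainder estimated uniformly over $a\in A$; it is precisely the finiteness of $E^0$ and the compactness of $A$ that render this remainder uniformly controllable and the dominating function independent of $r$.
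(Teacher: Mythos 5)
Your proof is correct and follows essentially the same route as the paper's: the same splitting of the difference into (i) a term where the integrands (exponent and drift frozen at $r^n$ versus $r$) differ but are integrated against $r^n$, handled by pointwise convergence via Lemma \ref{le:LambdaRIsContinuous} and (C4), uniformity over the compact $A$, and dominated convergence with the $L^1$ bound $\bar\lambda e^{-(\beta+\underline\lambda)s}$, and (ii) a term with the frozen integrand paired against $r^n$ versus $r$, handled by the defining property of Young convergence. Your only addition is to spell out explicitly, via a continuous bump function on the finite set $E^0$, why $(z,a)\mapsto Q^A(y'\mid z,a)$ is continuous under (B1) and (C3) — a detail the paper uses implicitly when invoking Lemma \ref{le:helpA1}.
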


In order to prove the continuity of the transition kernel of the filtered MDP we use the following regularization of the filter. Let $h_\sigma :\R\to \R, \sigma>0$ be a regularization kernel , i.e.
\begin{itemize}
\item[(i)] $h_\sigma(t)\ge 0$ for all $t\in\R$,
\item[(ii)] $\int_\R h_\sigma(t)dt =1$,
\item[(iii)] $\lim_{\sigma\downarrow 0} \int_{-a}^a h_\sigma(t) dt =1$ for all $a>0$.
\end{itemize}
The function $h_\sigma$ approximates the Dirac measure in point zero. For the general existence result we use a regularized filter of the form $\hat{\Psi}:\mathcal{P}(E^0)\times \mathcal{R}\times \R_+\times \R^d\to \mathcal{P}(E^0) $
\begin{equation}
    \hat{\Psi}(\rho,r,s,x)(y') := \frac{\int_\R \sum_{y\in E^0} \tilde{q}(u,y',x|y,r) \rho(y)h_\sigma(s-u)du}{\sum_{\hat{y}\in E^0}\int_\R \sum_{y\in E^0} \tilde{q}(u,\hat{y},x|y,r) \rho(y)h_\sigma(s-u)du }.
\end{equation}
Note that we have $\lim_{\sigma\downarrow 0} \hat{\Psi}=\Psi$ (see e.g. \cite{bre14}, Theorem 1.1.7).

\begin{lemma}\label{lem:Continuous} Under Assumptions (C1)-(C4),(B1),(B2) we have that  $(\rho,r)\mapsto \hat{\Psi}(\rho,r,x,u)$ is continuous for all $ x\in \R^d$.
\end{lemma}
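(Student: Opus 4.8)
\emph{Proof idea.} Since $E^0$ is finite, $\mathcal{P}(E^0)$ is the standard simplex in $\R^{|E^0|}$, so it suffices to prove that each coordinate map $(\rho,r)\mapsto\hat{\Psi}(\rho,r,s,x)(y')$ is continuous, with $s$ and $x$ held fixed. Each such coordinate is the quotient of
\[
n(\rho,r,y'):=\int_\R\sum_{y\in E^0}\tilde{q}(u,y',x\mid y,r)\,\rho(y)\,h_\sigma(s-u)\,du
\]
by the denominator $\sum_{\hat{y}\in E^0} n(\rho,r,\hat{y})$. The whole plan thus reduces to showing that $(\rho,r)\mapsto n(\rho,r,y')$ is continuous for every $y'$; the denominator is then continuous by summation, and on the set where it is strictly positive, i.e. exactly where $\hat{\Psi}$ is defined, the quotient rule yields continuity of each coordinate. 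Because $E^0$ is finite and $\rho\mapsto\rho(y)$ is continuous, $n$ depends on $\rho$ through a finite linear combination, so continuity in $\rho$ is immediate and the substance is continuity in $r$.

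I would first record two preliminaries. Writing $\tilde{q}(u,y',x\mid y,r)=f(x-y')\,e^{-\Gamma^r(y,u)}\int_A\psi_{y'}(\Phi^r(y,u),a)\,r_u(da)$ with $\psi_{y'}(z,a):=\lambda^A(z,a)\,Q^A(y'\mid z,a)$, the function $\psi_{y'}$ is continuous and bounded by $\bar\lambda$: indeed $\lambda^A$ is continuous and bounded by (C2), and $(z,a)\mapsto Q^A(y'\mid z,a)$ is continuous by (C3) applied to a continuous bounded $v$ that equals $1$ at $y'$ and $0$ at the remaining (finitely many, isolated) points of $E^0$, so that $\int v\,dQ^A(\cdot\mid z,a)=Q^A(y'\mid z,a)$. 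Moreover, for $r^k\to r$ in the Young topology, (C4) gives $\Phi^{r^k}(y,u)\to\Phi^r(y,u)$ and Lemma \ref{le:LambdaRIsContinuous} gives $\Gamma^{r^k}(y,u)\to\Gamma^r(y,u)$, both pointwise in $u$.

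The heart of the argument is to split the difference of the $u$-integrands defining $n(\rho,r^k,y')-n(\rho,r,y')$ into a \emph{nonlinear} part, in which $r^k$ occurs only through $\Phi^{r^k}$ and $\Gamma^{r^k}$, and a \emph{linear} part, in which it occurs only through $r^k_u(da)$, by adding and subtracting $e^{-\Gamma^r(y,u)}\int_A\psi_{y'}(\Phi^r(y,u),a)\,r^k_u(da)$. For the nonlinear part one bounds the integrand by $\bar\lambda\,|e^{-\Gamma^{r^k}(y,u)}-e^{-\Gamma^r(y,u)}|+e^{-\Gamma^r(y,u)}\sup_{a\in A}|\psi_{y'}(\Phi^{r^k}(y,u),a)-\psi_{y'}(\Phi^r(y,u),a)|$: the first summand tends to $0$ pointwise in $u$ by Lemma \ref{le:LambdaRIsContinuous}, and the second does too, since on the compact set $(\{\Phi^{r^k}(y,u)\}_k\cup\{\Phi^r(y,u)\})\times A$ (recall $A$ is compact by (C1)) $\psi_{y'}$ is uniformly continuous while $\Phi^{r^k}(y,u)\to\Phi^r(y,u)$. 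As both summands are dominated by $2\bar\lambda\,h_\sigma(s-u)$, which is integrable since $\int_\R h_\sigma=1$, dominated convergence kills the nonlinear part. For the linear part one fixes $r$ and considers the Carathéodory integrand $\phi(u,a):=e^{-\Gamma^r(y,u)}\psi_{y'}(\Phi^r(y,u),a)\,h_\sigma(s-u)$, which is continuous in $a$, measurable in $u$, and has the integrable envelope $\sup_{a\in A}|\phi(u,a)|\le\bar\lambda\,h_\sigma(s-u)$; such $\phi$ is admissible for the Young topology, so $\int_0^\infty\int_A\phi\,r^k_u(da)\,du\to\int_0^\infty\int_A\phi\,r_u(da)\,du$ and the linear part vanishes as well. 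Summing over $y$ against the weights $\rho(y)$ and multiplying by the fixed factor $f(x-y')$ then gives $n(\rho,r^k,y')\to n(\rho,r,y')$, as required.

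The main obstacle, and the very reason the \emph{regularized} filter is introduced, lies in the linear part: $r$ enters $\tilde{q}$ at the single time $s$, whereas convergence in the Young topology is only an integrated, time-averaged notion, so one cannot pass to the limit in $\int_A(\cdot)\,r^k_s(da)$ at a fixed $s$. Convolving with $h_\sigma(s-\cdot)$ converts this point evaluation into a time integral against an admissible test function, which is exactly what the Young topology controls, while the simultaneous nonlinear dependence through $\Phi^{r^k}$ and $\Gamma^{r^k}$ is absorbed by the dominated-convergence split above. The one point that must be checked carefully against the definition recalled in the appendix is that $\phi$ indeed belongs to the test-function class guaranteeing Young convergence, namely continuity in $a$, measurability in $u$, and an integrable envelope.
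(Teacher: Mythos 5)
Your proposal is correct and takes essentially the same route as the paper: the paper proves Lemma \ref{lem:Continuous} by repeating the decomposition from the proof of Lemma \ref{lem:Continuous2} for the $h_\sigma$-smoothed integral $r \mapsto \int_{\R} h_\sigma(s-u)\,\tilde{q}(u,y',x|y,r)\,du$ (nonlinear part via Lemma \ref{le:LambdaRIsContinuous}, Lemma \ref{le:helpA1} and dominated convergence; linear part via Young-topology convergence, admissible precisely because of the factor $h_\sigma(s-\cdot)$), and then concludes by observing that $\hat{\Psi}$ is a quotient of finitely many such continuous functions, linear in $\rho$. Your write-up merely makes explicit details the paper leaves implicit, such as deducing continuity of $(z,a)\mapsto Q^A(y'\mid z,a)$ from (C3) and (B1) and exhibiting the integrable envelope $\bar{\lambda}\,h_\sigma(s-\cdot)$.
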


Finally we obtain:


\begin{lemma}\label{lem:qprimeweaklycont}
Under all Assumptions  (C1)-(C4), (B1),(B2) the stochastic transition kernel $\hat{Q}$ in Definition \ref{def:filteredMDP}   where we replace $\Psi$ by the regularized filter $\hat{\Psi}$ is weakly continuous.
\end{lemma}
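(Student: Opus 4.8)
The plan is to verify weak continuity through its usual characterization: it suffices to show that for every bounded continuous $w:\mathcal{P}(E^0)\to\R$ the map
\[
G(\rho,r):=\int w\,d\hat{Q}(\cdot\mid\rho,r)=\int_{\R_+}\int_{\R^d}\sum_{y\in E^0} w\big(\hat{\Psi}(\rho,r,s,x)\big)\,\tilde{q}^{SX}(s,x\mid y,r)\,\rho(y)\,\nu(dx)\,ds
\]
is continuous. So I would fix a sequence $(\rho_n,r_n)\to(\rho,r)$ in $\mathcal{P}(E^0)\times\mathcal{R}$ (Young topology on $\mathcal{R}$) and aim at $G(\rho_n,r_n)\to G(\rho,r)$. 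The difficulty to keep in mind from the outset is that Young convergence $r_n\to r$ does \emph{not} entail pointwise-in-$s$ convergence of $\tilde{q}^{SX}(s,x\mid y,r_n)$, since the relaxed control enters through $\int_A(\cdots)\,r_{n,s}(da)$; this is exactly why a naive dominated-convergence argument on the whole integrand fails, and why Lemma \ref{lem:Continuous2} is stated only for the $s$-integrated density.

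First I would record a uniform majorant. Writing out $\tilde{q}^{SX}$ and using $\Gamma^r(y,s)\ge(\beta+\underline{\lambda})s$ with $\lambda^A\le\bar{\lambda}$ from (C2), $Q^A(\cdot\mid\cdot,\cdot)\le1$, and the bounded density from (B2), one obtains
\[
\sum_{y\in E^0}\tilde{q}^{SX}(s,x\mid y,r)\,\rho(y)\le \bar{m}(s,x):=\bar{\lambda}\,e^{-(\beta+\underline{\lambda})s}\sum_{y'\in E^0} f_\epsilon(x-y'),
\]
which is independent of $(\rho,r)$ and $ds\otimes\nu$-integrable. This fixed bound is what allows passage to the limit. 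Next I would split $G(\rho_n,r_n)-G(\rho,r)$ by inserting $w(\hat{\Psi}(\rho,r,\cdot))$, giving a filter-difference term
\[
\mathrm{(I)}=\int\!\!\int\big[w(\hat{\Psi}(\rho_n,r_n,\cdot))-w(\hat{\Psi}(\rho,r,\cdot))\big]\,\textstyle\sum_y\tilde{q}^{SX}(\cdot\mid y,r_n)\rho_n(y)
\]
and a density-difference term
\[
\mathrm{(II)}=\int\!\!\int w(\hat{\Psi}(\rho,r,\cdot))\,\big[\textstyle\sum_y\tilde{q}^{SX}(\cdot\mid y,r_n)\rho_n(y)-\sum_y\tilde{q}^{SX}(\cdot\mid y,r)\rho(y)\big].
\]
For $\mathrm{(I)}$ the integrand tends to $0$ pointwise in $(s,x)$, because Lemma \ref{lem:Continuous} gives $\hat{\Psi}(\rho_n,r_n,s,x)\to\hat{\Psi}(\rho,r,s,x)$ and $w$ is continuous, while the remaining factor is bounded by $\bar m$; dominated convergence with majorant $2\|w\|_\infty\,\bar m$ then yields $\mathrm{(I)}\to0$. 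Here the regularization is essential, since it is $\hat{\Psi}$ and not $\Psi$ that is continuous in $(\rho,r)$.

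The core step is $\mathrm{(II)}$, where the test factor $w(\hat{\Psi}(\rho,r,\cdot,\cdot))$ is now \emph{fixed} and—again because of the convolution with $h_\sigma$—continuous and bounded in $s$. Fixing $x$, I would write the inner $s$-integral as $\int_{\R_+}\int_A\xi_n(s,a)\,r_{n,s}(da)\,ds$, with $\xi_n(s,a):=w(\hat{\Psi}(\rho,r,s,x))\,e^{-\Gamma^{r_n}(y,s)}\lambda^A(\Phi^{r_n}(y,s),a)\,\bar{q}(x,\Phi^{r_n}(y,s),a)$ and $\bar{q}(x,z,a):=\sum_{y'}f_\epsilon(x-y')Q^A(y'\mid z,a)$. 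By (C4) and Lemma \ref{le:LambdaRIsContinuous} one has $\Phi^{r_n}(y,s)\to\Phi^r(y,s)$ and $\Gamma^{r_n}(y,s)\to\Gamma^r(y,s)$ for each $s$; combined with compactness of $A$ (C1), continuity of $\lambda^A$ (C2), and continuity of $z\mapsto Q^A(y'\mid z,a)$ (from (C3) together with the finiteness of $E^0$), this upgrades to $\sup_{a\in A}|\xi_n(s,a)-\xi(s,a)|\to0$ for each $s$, where $\xi$ is the analogous expression built from $\Phi^r,\Gamma^r$. A further split then handles the two effects separately: $\int\!\int(\xi_n-\xi)\,r_{n,s}(da)\,ds$ is dominated by $\int_{\R_+}\sup_a|\xi_n-\xi|\,ds\to0$ (dominated convergence, majorant a multiple of $e^{-(\beta+\underline{\lambda})s}$), whereas $\int\!\int\xi\,r_{n,s}(da)\,ds\to\int\!\int\xi\,r_s(da)\,ds$ by the very definition of Young convergence, $\xi(s,\cdot)$ being continuous on the compact $A$ and $ds$-dominated. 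This gives convergence of the inner integral for each $x$, and a final dominated convergence in $x$ (with majorant inherited from $\bar m$) would yield $\mathrm{(II)}\to0$, hence $G(\rho_n,r_n)\to G(\rho,r)$.

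The hard part will be precisely step $\mathrm{(II)}$: since Young convergence controls only $s$-integrals and not pointwise values of the relaxed-control integrals, one cannot treat the product ``filter $\times$ density'' as a single convergent integrand. The remedy is to freeze the filter factor—now continuous in $s$ thanks to the regularization—and reduce to a Young-convergence statement with a \emph{varying} integrand $\xi_n$, whose uniform-in-$a$ control is supplied by the compactness of $A$ and the joint continuity of the local characteristics. Everything else (the reduction to bounded continuous $w$, the domination by $\bar m$, and the two dominated-convergence passages) is routine once this crux is in place.
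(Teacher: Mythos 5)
Your argument is correct, and it is not quite the paper's argument: both proofs test $\hat{Q}$ against a bounded continuous test function, split the resulting difference into a ``filter-difference'' term and a ``density-difference'' term, and finish with dominated convergence against the majorant $C e^{-(\beta+\underline{\lambda})s}\sum_{y'}f_\epsilon(x-y')$, but you pair the factors the opposite way. The paper writes $v(\hat{\Psi}_n)q_n-v(\hat{\Psi})q = v(\hat{\Psi}_n)(q_n-q)+\big(v(\hat{\Psi}_n)-v(\hat{\Psi})\big)q$ (moving filter attached to the density difference) and then disposes of the first term by pulling out $\sup_\rho|v(\rho)|$ and citing Lemma \ref{lem:Continuous2}; you write it as $\big(w(\hat{\Psi}_n)-w(\hat{\Psi})\big)q_n+w(\hat{\Psi})(q_n-q)$ (fixed filter attached to the density difference) and handle the second term by re-running the Young-topology argument behind Lemma \ref{lem:Continuous2} with the fixed, bounded, measurable-in-$s$ weight $w(\hat{\Psi}(\rho,r,s,x))$ inserted into the integrand, i.e.\ via Lemma \ref{le:helpA1} for the $\sup_a$ part and Definition \ref{YOUNG-4} for the part with fixed integrand. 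Your pairing is in fact the safer one: the paper's bound $\sup_\rho|v(\rho)|\cdot\big|\int\int (q_n-q)\,\nu(dx)ds\big|$ is not a legitimate inequality as written, because the factor $v(\hat{\Psi}(\rho^n,r^n,s,x))$ varies with $(s,x)$ while $q_n-q$ is signed; to justify that step one would need $L^1$-convergence $\int\int|q_n-q|\,\nu(dx)ds\to 0$, which Lemma \ref{lem:Continuous2} (pointwise-in-$x$ convergence of the $s$-integrals) does not directly deliver. Your version, in which the weight multiplying $q_n-q$ does not depend on $n$, reduces exactly to a Young-convergence statement and avoids this issue, at the cost of redoing rather than citing Lemma \ref{lem:Continuous2}. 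Two minor points to tidy up: in your term $\mathrm{(II)}$ you should first separate the $\rho_n$-versus-$\rho$ discrepancy (immediate, since $E^0$ is finite and $\rho_n\to\rho$ pointwise) before fixing $y$ and running the $\xi_n$ argument; and for the Young-topology step only measurability of $s\mapsto w(\hat{\Psi}(\rho,r,s,x))$ is required, not the continuity in $s$ furnished by $h_\sigma$ --- the regularization is genuinely indispensable only in your term $\mathrm{(I)}$, through Lemma \ref{lem:Continuous}, exactly as you observe.
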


In what follows we will always assume that the regularized filter version is used in the definition of $\hat{Q}$. The next step is to define the following function space
$$ \mathcal{C}_{lsc}^+ := \{ v: \mathcal{P}(E^0) \to [0,\infty] : v \mbox{  is lower semi-continuous }\}$$
and the following operators for $v\in \mathcal{C}_{lsc}^+, \rho\in  \mathcal{P}(E^0), r\in \mathcal{R}  $ and $f\in \Pi$:
\begin{eqnarray*}
  (Lv)(\rho,r) &:=& \hat{g}(\rho,r) + \int_{\mathcal{P}(E^0)} v(\rho') \hat{Q}(d\rho'|\rho,r), \\
   (T_fv)(\rho) &:=& \hat{g}(\rho,f(\rho)) + \int_{\mathcal{P}(E^0)} v(\rho') \hat{Q}(d\rho'|\rho,f(\rho)), \\
   (Tv)(\rho)&:=& \inf_{r\in\mathcal{R}} (Lv)(\rho,r).
\end{eqnarray*}

Our previous results lead now to the following observation:

\begin{theorem}\label{theo:lsc}
Under all Assumptions  (C1)-(C5), (B1),(B2) we have that
\begin{itemize}
  \item[a)] $T : \mathcal{C}_{lsc}^+ \to \mathcal{C}_{lsc}^+$.
  \item[b)] For all $v\in \mathcal{C}_{lsc}^+$ there exists an $f^*\in\Pi$ such that
  $$ (Tv)(\rho)= \inf_{r\in\mathcal{R}} (Lv)(\rho,r)= (Lv)(\rho,f^*(\rho)).$$
  \item[c)] For all $v,w\in \mathcal{C}_{lsc}^+$ with $v\le w$ we obtain $Tv\le Tw$.
\end{itemize}
\end{theorem}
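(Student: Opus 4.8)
The plan is to reduce everything to the joint lower semi-continuity of the map $(\rho,r)\mapsto (Lv)(\rho,r)$ on the product space $\mathcal{P}(E^0)\times\mathcal{R}$, from which parts a) and b) follow by standard compactness and selection arguments; part c) is essentially immediate. Observe first that since $E^0$ is finite, $\mathcal{P}(E^0)$ is homeomorphic to the standard simplex and is therefore a compact metric space, while $\mathcal{R}$ is compact in the Young topology under assumption (C1), as recalled after the definition of $\mathcal{R}$.

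Step one is to show that $Lv$ is lower semi-continuous jointly in $(\rho,r)$ for every $v\in\mathcal{C}_{lsc}^+$. The first summand $\hat{g}(\rho,r)$ is lower semi-continuous by Lemma \ref{theo:GPrimeLowerSemiContinuous}. For the integral term I would invoke the standard fact that, for a lower semi-continuous function $v$ that is bounded from below, the map $\mu\mapsto\int v\,d\mu$ is lower semi-continuous with respect to weak convergence of probability measures; composing this with the weak continuity of the stochastic kernel $\hat{Q}$ established in Lemma \ref{lem:qprimeweaklycont} yields that $(\rho,r)\mapsto\int_{\mathcal{P}(E^0)} v(\rho')\,\hat{Q}(d\rho'|\rho,r)$ is lower semi-continuous. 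As a sum of two lower semi-continuous functions, $Lv$ is then lower semi-continuous, and it takes values in $[0,\infty]$ because $\hat{g}\ge 0$ and $v\ge 0$.

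For part a) I would then use that the infimum of a jointly lower semi-continuous function over a compact parameter set is lower semi-continuous in the remaining variable: since $\mathcal{R}$ is compact and $(\rho,r)\mapsto(Lv)(\rho,r)$ is lower semi-continuous, the map $\rho\mapsto (Tv)(\rho)=\inf_{r\in\mathcal{R}}(Lv)(\rho,r)$ is lower semi-continuous and $[0,\infty]$-valued, so $Tv\in\mathcal{C}_{lsc}^+$. Part c) is immediate: if $v\le w$ then, since $\hat{Q}(\cdot|\rho,r)$ is a nonnegative measure, $\int v\,d\hat{Q}\le\int w\,d\hat{Q}$, whence $(Lv)(\rho,r)\le (Lw)(\rho,r)$ for every $(\rho,r)$; taking the infimum over $r$ gives $Tv\le Tw$.

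The main work, and the expected obstacle, lies in part b): producing a \emph{measurable} minimizing selector $f^*\in\Pi$. For fixed $\rho$ the lower semi-continuous function $r\mapsto(Lv)(\rho,r)$ attains its minimum on the compact set $\mathcal{R}$, so the infimum is in fact a minimum and a selector exists pointwise; the delicate point is measurability in $\rho$. Here I would appeal to a measurable selection theorem for lower semi-continuous cost functions with compact action space — the present setting is exactly the \emph{lower semi-continuous model} of Markov decision theory, so the selection result in \cite{br11} (equivalently, the Kuratowski--Ryll-Nardzewski or Bertsekas--Shreve selection theorems) applies and guarantees a measurable $f^*:\mathcal{P}(E^0)\to\mathcal{R}$ with $(Lv)(\rho,f^*(\rho))=\inf_{r\in\mathcal{R}}(Lv)(\rho,r)$ for all $\rho$. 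Verifying the hypotheses of that theorem — joint lower semi-continuity of $Lv$, compactness of $\mathcal{R}$, and that $\mathcal{P}(E^0)$ is a Borel space — is precisely what Step one and the structural remarks above supply, so the selection closes the argument.
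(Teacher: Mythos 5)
Your proposal is correct and takes essentially the same route as the paper: establish joint lower semi-continuity of $(\rho,r)\mapsto (Lv)(\rho,r)$ by combining Lemma \ref{theo:GPrimeLowerSemiContinuous} for $\hat{g}$ with the weak continuity of $\hat{Q}$ from Lemma \ref{lem:qprimeweaklycont}, then get part a) from the inf-over-compact-set argument, part b) from a classical measurable selection theorem (Bertsekas--Shreve), and part c) by monotonicity of the integral. The only cosmetic difference is that you invoke the Portmanteau-type fact that integration of a nonnegative lower semi-continuous function is weakly lower semi-continuous, whereas the paper reaches the same conclusion by approximating $v$ from below by continuous bounded functions, exactly as in its proof of Lemma \ref{theo:GPrimeLowerSemiContinuous}; the two arguments are interchangeable.
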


\begin{proof}
The proof of this theorem is rather standard. If we first choose $v$ to be continuous and bounded we obtain by Lemma \ref{lem:qprimeweaklycont} that
$$ (\rho,r)\mapsto \int_{\mathcal{P}(E^0)} v(\rho') \hat{Q}(d\rho'|\rho,r)$$
is continuous and bounded. Thus using the same line of arguments as in the proof of Lemma \ref{theo:GPrimeLowerSemiContinuous} we obtain that the same mapping is lower semi-continuous when we plug in a lower semi-continuous function $v$. Since the sum of lower semi-continuous functions is again lower semi-continuous we get with Lemma \ref{theo:GPrimeLowerSemiContinuous} that
$$ (\rho,r) \mapsto \hat{g}(\rho,r)+\int_{\mathcal{P}(E^0)} v(\rho') \hat{Q}(d\rho'|\rho,r)= (Lv)(\rho,r)$$
is lower semi-continuous. We can now use a classical measurable selection theorem for part b) (see e.g. Proposition 7.33 in \cite{BertShrev}) and part a) follows as in Proposition 2.4.3 in \cite{br11}, see also section 3.3 of \cite{hll} or Propositions 7.31 and 7.33 in \cite{BertShrev}. Part c) is obvious.
\end{proof}

\begin{remark}\label{rem:Quncontrolled}
Note  that the existence of a minimizer $f^*\in \Pi$ in Theorem \ref{theo:lsc} b) cannot be shown in general if we take the original filter $\Psi$. In this case examples can be constructed where the filter is not continuous  (for a discussion and the example see \cite{lange17} Section 3.2.2). The crucial point here is that the single action which is applied at the jump time point enters the filter. This effect is incompatible with the Young toplogy. It does not occur when $\lambda^A$ and $Q^A$ are uncontrolled.  Indeed
Lemma \ref{lem:qprimeweaklycont}  holds true for the original filter  $\Psi$  in case $\lambda^A$, $Q^A$ are not controlled. In this case we have
\begin{equation*}
    \Psi(\rho,r,s,x)(y') := \frac{\sum_{y\in E^0} \tilde{p}(s,y',x|y,r) \rho(y)}{\sum_{\hat{y}\in E^0} \sum_{y\in E^0} \tilde{p}(s,\hat{y},x|y,r) \rho(y) }.
\end{equation*}
with
$$\tilde{p}(t,y',x|y,r)=\exp\Big( -\int_0^t \lambda(\Phi^r(y,s))ds\Big) f(x-y') \lambda(\Phi^r(y,t)) Q(y'|\Phi^r(y,t)). $$
So $\Psi$ depends on $r$ only through $\Phi^r(y,\cdot)$ which is continuous by Assumption (C4).
\end{remark}

In order to derive the optimality equation we need to consider the $n$-stage version of the optimization problems. Thus we define for a policy $\pi\in\Pi^\infty$ and the function $ \underline{0}\in   \mathcal{C}_{lsc}^+$ which is identical to zero, the following value functions:
\begin{eqnarray*}
  V_n(\rho,\pi) &:=& T_{f_0}\ldots T_{f_{n-1}} \underline{0} \\
  V_n(\rho) &:=& \inf_{\pi\in\Pi^\infty} V_n(\rho,\pi) = T^n  \underline{0}.
\end{eqnarray*}
Note that $V_n(\rho,\pi)$ is exactly the expected cost of policy $\pi$ until jump time $T_n$. By general MDP techniques (see e.g. \cite{br11}, chap.\ 2) we obtain the last equation $V_n(\rho) = T^n  \underline{0}$ which also implies that $V_n=TV_{n-1}$. Since the cost function is non-negative we obtain by monotone convergence that the following limits exist:
\begin{eqnarray*}
  V(\rho,\pi) &:=& \lim_{n\to\infty} V_n(\rho,\pi) \\
  V_\infty(\rho)  &:=& \lim_{n\to\infty} V_n(\rho).
\end{eqnarray*}
By definition we get that $V(\rho) = \inf_{\pi\in\Pi^\infty} V(\rho,\pi)$. From Theorem \ref{theo:lsc} it follows that $V_n\in \mathcal{C}_{lsc}^+$ because $\underline{0}\in\mathcal{C}_{lsc}^+$ and hence also $V_\infty \in \mathcal{C}_{lsc}^+$. Moreover we immediately obtain by monotonicity that $V(\rho,\pi) \ge V_n(\rho,\pi)$ for all $\pi\in\Pi^\infty$ which then implies $V(\rho) \ge V_n(\rho)$ and with $n\to\infty$ that $V(\rho) \ge V_\infty(\rho)$. The main result of this section is the following:

\begin{theorem}\label{theo:VTV}
Under all Assumptions  (C1)-(C4), (B1),(B2) we have that
\begin{itemize}
  \item[a)] $TV_\infty =V_\infty$.
  \item[b)] $V_\infty = V.$
  \item[c)] There exists an $f^*\in\Pi$ with $TV=T_{f^*}V$ and the stationary policy $(f^*,f^*,\ldots)$ is optimal for problem \eqref{eq:ValueFunctionfilter}. The optimal policy for the original problem \eqref{eq:ValueFunctionControlModelTimeContinuous} is thus $(\pi_0^P,\pi_1^P,\ldots)$ with
      \begin{eqnarray*}
        \pi_0^P(x,t) &=& f^\star\big(Q_0(\cdot|x)\big)(t),\quad x\in \R^d \\
        \pi^P_n(h_n,t) &=& f^\star\big(\mu_n(\cdot|h_n)\big)(t), \quad h_n\in H_n.
      \end{eqnarray*}
\end{itemize}
\end{theorem}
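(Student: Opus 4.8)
The plan is to treat this as a standard positive-cost Markov Decision Process fixed-point argument, leveraging the structural results already established. The three parts build on each other: part a) establishes that the limit value function $V_\infty$ is a fixed point of $T$, part b) identifies $V_\infty$ with the true value function $V$, and part c) extracts the optimal policy and translates it back to the original continuous-time problem.

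For part a), I would prove the two inequalities $TV_\infty \le V_\infty$ and $TV_\infty \ge V_\infty$ separately. Since $V_n = TV_{n-1}$ and the sequence $(V_n)$ increases to $V_\infty$, monotonicity of $T$ (Theorem~\ref{theo:lsc}c) gives $TV_\infty \ge TV_{n-1} = V_n$ for every $n$, and letting $n\to\infty$ yields $TV_\infty \ge V_\infty$. For the reverse inequality I would exploit lower semi-continuity and compactness: by Theorem~\ref{theo:lsc}b), for each $\rho$ there is a minimizer $r_n$ attaining $(TV_n)(\rho) = (LV_n)(\rho,r_n)$; since $\mathcal{R}$ is compact under the Young topology (assumption (C1)), a subsequence of the $r_n$ converges to some $r^*$, and using the lower semi-continuity of $(Lv)(\rho,\cdot)$ together with monotone convergence of $V_n \uparrow V_\infty$ inside the integral against $\hat{Q}$, one passes to the limit to obtain $(TV_\infty)(\rho) \le \liminf_n (TV_n)(\rho) = \liminf_n V_{n+1}(\rho) = V_\infty(\rho)$. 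Combining both inequalities gives $TV_\infty = V_\infty$.

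For part b), the inequality $V \ge V_\infty$ is already noted in the text before the theorem. For $V \le V_\infty$ I would invoke part c)'s selector: since $V_\infty \in \mathcal{C}_{lsc}^+$ by Theorem~\ref{theo:lsc}a), there exists $f^*\in\Pi$ with $TV_\infty = T_{f^*}V_\infty = V_\infty$; iterating the monotone operator $T_{f^*}$ and using that the cost is non-negative, monotone convergence gives $V(\rho,(f^*,f^*,\ldots)) = \lim_n (T_{f^*}^n\underline{0})(\rho) \le \lim_n (T_{f^*}^n V_\infty)(\rho) = V_\infty(\rho)$, whence $V(\rho)\le V_\infty(\rho)$. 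Together with the reverse inequality this forces $V = V_\infty$, and along the way the stationary policy $(f^*,f^*,\ldots)$ is shown to attain $V$, giving optimality in part c) for the filtered problem~\eqref{eq:ValueFunctionfilter}. The selector $f^*$ itself comes directly from Theorem~\ref{theo:lsc}b) applied to the fixed point, using $TV = TV_\infty = T_{f^*}V$.

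The final translation back to the original continuous-time problem~\eqref{eq:ValueFunctionControlModelTimeContinuous} is where the earlier equivalence lemmas do the work: chaining $V(Q_0(\cdot|x),\pi) = J(x,\pi^D)$ from Lemma~\ref{lem:bayes} with $J(x,\pi^P) = \tilde{J}(x,\pi^D)$ from Lemma~\ref{prop:EquivalenceTimeContToTimeDiscIOptProb}, and using the embedding~\eqref{eq:policy} of Markov policies on the filter into $\Pi^D$, shows that the cost of the stationary filtered policy equals the cost of the specified piecewise policy $(\pi_0^P,\pi_1^P,\ldots)$, so optimality transfers. The main obstacle I anticipate is the reverse inequality in part a): interchanging the limit in $n$ with both the infimum over the noncompact-looking action dynamics and the integral against $\hat{Q}$ requires care, and it is precisely here that one needs weak continuity of $\hat{Q}$ (Lemma~\ref{lem:qprimeweaklycont}), compactness of $\mathcal{R}$, and lower semi-continuity of $\hat{g}$ to justify passing the limit through the minimization; without the regularized filter ensuring continuity of $\hat{Q}$ this step would fail, as flagged in Remark~\ref{rem:Quncontrolled}.
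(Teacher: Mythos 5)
Your proposal is correct and takes essentially the same route as the paper's own proof: the easy inequality in a) by monotonicity, the hard inequality via minimizers of $(LV_n)(\rho,\cdot)$ combined with compactness of $\mathcal{R}$, lower semi-continuity and monotone convergence, then in b) the selector $f^*$ with $T_{f^*}V_\infty = V_\infty$ iterated against $\underline{0}$, and in c) the transfer to the original problem via the Correspondence Lemma and the equivalence lemmas. The only difference is cosmetic: you spell out the chain $V(Q_0(\cdot|x),\pi)=\tilde{J}(x,\pi^D)=J(x,\pi^P)$ in part c), which the paper leaves implicit.
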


\begin{proof}
\begin{itemize}
  \item[a)] Due to monotonicity of $V_n$ and the $T$-operator we obtain $V_n\le TV_\infty$ for all $n\in\N$. For $n\to\infty$ we obtain $V_\infty \le TV_\infty$. It remains to prove $V_\infty \ge TV_\infty$. Since $T^n  \underline{0} \in \mathcal{C}_{lsc}^+$ we know by Theorem \ref{theo:lsc} that there exist decision rules $f_k^*$ such that $T^n \underline{0} = T_{f_0^*}\ldots T_{f_{n-1}^*} \underline{0} $. Now fix $\rho\in \mathcal{P}(E^0)$ and define $r^n := f_n^*(\rho)$. Then $(r^n) \subset \mathcal{R}$ and since $\mathcal{R}$ is compact there exists a converging subsequence $\lim_{k\to\infty}r^{n_k}= r\in\mathcal{R}.$ This implies by monotonicity for an arbitrary index  $n_k$ and all $n\le n_k$:
      $$ V_\infty(\rho) \ge (T^{n_k+1} \underline{0})(\rho) = (L T^{n_k} \underline{0})(\rho,r^{n_k}) \ge (LT^n\underline{0})(\rho,r^{n_k}).$$
      Since $T^n \underline{0} \in \mathcal{C}_{lsc}^+$ the mapping $r\mapsto (LT^n \underline{0})(\rho,r)$ is lower semi-continuous. Thus we obtain by definition of this property that
      $$   V_\infty(\rho) \ge  \lim_{k\to\infty} (L T^{n} \underline{0})(\rho,r^{n_k}) \ge (LT^n\underline{0})(\rho,r).$$
      And with $n\to\infty$ we obtain by monotone convergence
       $$   V_\infty(\rho) \ge  (L V_\infty)(\rho,r) \ge (T V_\infty)(\rho)$$
       which finally implies the statment.
  \item[b)] Since $V\ge V_\infty$ it is sufficient to prove $V\le V_\infty$. By part a) we know that $V_\infty \ge TV_\infty$. Since $ V_\infty\in \mathcal{C}_{lsc}^+$ we know by Theorem \ref{theo:lsc} that there exists a decision rule $f^*$ such that $TV_\infty = T_{f^*}V_\infty$. Iterating this equation yields:
      $$ V_\infty \ge   T_{f^*}V_\infty =  T_{f^*}^nV_\infty \ge  T_{f^*}^n\underline{0}.$$ with $n\to\infty$ we obtain
      $$ V_\infty(\rho) \ge V(\rho,f^{*\infty}) \ge V(\rho)$$
      which implies the statement.
  \item[c)] This follows from the proof of part b) and the use of the Correspondence Lemma \ref{theo:CorrespondenceTheorem}.
\end{itemize}
\end{proof}

Besides the existence of optimal policies Theorem \ref{theo:VTV} presents a numerical way of computing the value function $V$. Since $V=V_\infty$ according to part b) we can use \emph{value iteration} to approximate $V$, i.e. we can start with $V_0 = \underline{0}$ and compute $V_n =TV_{n-1}$ for large $n$.

Since we use the regularized filter with a regularization kernel $h_\sigma$, the optimal policy depends on $\sigma$. For nice problems we expect convergence of the optimal policies for $\sigma\downarrow 0$ to an optimal policy for the original problem. A general theorem which guarantees this is as follows. Fix state $\rho\in  \mathcal{P}(E^0)$ and suppose that $\sigma_n\downarrow 0$ for $n\to\infty$. We denote the value function which corresponds to $\sigma_n$ by $V^n$. Let
$$ \mathcal{R}_n := \{r\in \mathcal{R}\; |\; (LV^{n})(\rho,r) = (TV^n)(\rho) \}$$ for $n\in\N\cup \{\infty\}$ be the set of maximum points of the value function $V^n$ in state $\rho$. The value function $V^\infty$ corresponds to the problem with original filter $\Psi$. Further let
$$ Ls \mathcal{R}_n := \{ r \in \mathcal{R}\;  |\;  r \mbox{ is an accumulation point of } (r^n) \mbox{ with } r^n\in\mathcal{R}_n\}. $$
The next theorem follows from Theorem A.1.5 in \cite{br11}.

\begin{theorem}\label{theo:convsigma}
Suppose there exists a sequence $\delta_m\downarrow 0$ for $m\to\infty$ such that $(LV^{n})(\rho,r) \ge (LV^{m})(\rho,r)+\delta_m$ for all $n\ge m$, i.e. the sequence $(LV^n)(\rho,r)$ is weakly increasing. Then $\emptyset \neq Ls \mathcal{R}_n \subset \mathcal{R}_\infty$.
\end{theorem}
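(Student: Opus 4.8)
The plan is to read Theorem~\ref{theo:convsigma} as a statement about convergence of the sets of minimizers of the family of functions $g_n := (LV^n)(\rho,\cdot)$ on the compact action space $\mathcal{R}$, and to verify directly the ingredients behind the cited Theorem~A.1.5 in \cite{br11}. Throughout I abbreviate $g_n(r):=(LV^n)(\rho,r)$ and $g_\infty(r):=(LV^\infty)(\rho,r)=\lim_{n\to\infty} g_n(r)$, so that $\mathcal{R}_n=\{r\in\mathcal{R}: g_n(r)=\inf_{r'}g_n(r')\}$. Two structural facts are available at once. First, $\mathcal{R}$ is compact under the Young topology (Assumption~(C1) together with the compactness statement following the definition of $\mathcal{R}$). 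Second, each $V^n$ lies in $\mathcal{C}_{lsc}^+$ (it is the value function of the $\sigma_n$-regularized problem, to which Theorem~\ref{theo:VTV} applies), and hence by the argument of Theorem~\ref{theo:lsc} the map $g_n$ is lower semi-continuous and, by part~b) of that theorem, attains its infimum on $\mathcal{R}$. In particular every $\mathcal{R}_n$ is nonempty.

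First I would dispose of nonemptiness of $Ls\,\mathcal{R}_n$. Choosing $r^n\in\mathcal{R}_n$ for each $n$ and using sequential compactness of $\mathcal{R}$, the sequence $(r^n)$ has a subsequence converging to some $r\in\mathcal{R}$; such a limit is by definition an accumulation point, so $r\in Ls\,\mathcal{R}_n$ and the set is nonempty.

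The heart of the matter is the inclusion $Ls\,\mathcal{R}_n\subset\mathcal{R}_\infty$, which I would obtain by a two-sided squeeze combining lower semicontinuity on one side with the weakly increasing hypothesis on the other. Fix $r\in Ls\,\mathcal{R}_n$ and a subsequence $r^{n_k}\to r$ with $r^{n_k}\in\mathcal{R}_{n_k}$. For the upper estimate, the hypothesis makes $(g_n(r'))_n$ increasing to $g_\infty(r')$, so for every competitor $r'\in\mathcal{R}$,
\[
g_{n_k}(r^{n_k})=\inf_{s\in\mathcal{R}}g_{n_k}(s)\le g_{n_k}(r')\le g_\infty(r').
\]
For the lower estimate, fix $m$; for $n_k\ge m$ the hypothesis gives $g_{n_k}(r^{n_k})\ge g_m(r^{n_k})+\delta_m$, and since $g_m$ is lower semi-continuous and $r^{n_k}\to r$,
\[
\liminf_{k\to\infty}g_{n_k}(r^{n_k})\ge \liminf_{k\to\infty}g_m(r^{n_k})+\delta_m\ge g_m(r)+\delta_m\ge g_m(r).
\]
Letting $m\to\infty$ and using $g_m(r)\uparrow g_\infty(r)$ together with $\delta_m\downarrow0$ yields $\liminf_k g_{n_k}(r^{n_k})\ge g_\infty(r)$. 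Combining the two estimates gives $g_\infty(r)\le g_\infty(r')$ for every $r'\in\mathcal{R}$, hence $g_\infty(r)=\inf_{r'}g_\infty(r')=(TV^\infty)(\rho)$, i.e.\ $r\in\mathcal{R}_\infty$.

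The step I expect to be the genuine obstacle is the pointwise monotone convergence $g_n\uparrow g_\infty$ underlying both estimates, namely that the regularized-filter functions $(LV^n)(\rho,\cdot)$ increase to the original-filter function $(LV^\infty)(\rho,\cdot)$. The stated hypothesis supplies precisely the monotone increase, while the identification of the limit with the original-filter quantity rests on $\lim_{\sigma\downarrow0}\hat{\Psi}=\Psi$ (the remark after the definition of $\hat{\Psi}$, cf.\ Remark~\ref{rem:Quncontrolled}). Turning this convergence of filters into convergence of the composed value functions, in a form strong enough to identify the increasing limit with $g_\infty$, is the delicate point, and it is exactly what the abstract convergence Theorem~A.1.5 in \cite{br11} packages; everything else in the argument is compactness of $\mathcal{R}$ and the definition of lower semicontinuity.
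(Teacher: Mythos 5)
Your proof is correct, and it is essentially the proof the paper does not write out: the paper's entire argument for this theorem is the one-line appeal to Theorem A.1.5 of \cite{br11}, and what you have done is reprove that abstract result in the present setting. Your two ingredients are exactly its content: (i) each $\mathcal{R}_n$ is nonempty (lower semicontinuity of $r\mapsto (LV^n)(\rho,r)$ via Theorem \ref{theo:lsc} applied to the $\sigma_n$-regularized model, plus compactness of $\mathcal{R}$), and compactness together with metrizability of the Young topology yields an accumulation point; (ii) the two-sided squeeze, with the upper bound coming from weak monotonicity and the lower bound from lower semicontinuity of each \emph{fixed} $g_m$ together with $\delta_m \downarrow 0$. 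A genuinely nice feature of your argument is that it never requires lower semicontinuity of the limit function itself --- which is precisely the property that can fail for the unregularized filter (Remark \ref{rem:Quncontrolled}), so this is not a dispensable subtlety. The one inaccuracy is in your closing paragraph: the identification $\lim_n (LV^n)(\rho,\cdot) = (LV^\infty)(\rho,\cdot)$ is \emph{not} ``packaged'' by Theorem A.1.5 of \cite{br11}; that theorem is a purely abstract statement about weakly monotone sequences of semicontinuous functions on a compact set, whose limit function is by definition the pointwise limit. What your squeeze literally proves is that every point of $Ls \mathcal{R}_n$ minimizes the pointwise limit $\lim_n (LV^n)(\rho,\cdot)$; equating that limit with the original-filter quantity $(LV^\infty)(\rho,\cdot)$ is an additional fact which the paper also leaves implicit (it is the intended force of the ``weakly increasing'' hypothesis, resting ultimately on $\hat{\Psi}\to\Psi$ as $\sigma\downarrow 0$). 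So on this point you are no less rigorous than the paper, but the deferral should be aimed at the theorem's formulation and hypotheses rather than at the cited result.
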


The interpretation of Theorem \ref{theo:convsigma} is as follows: Fix $\rho\in \mathcal{P}(E^0)$ and take $\sigma_n\downarrow 0$. Suppose the sequence $(LV^n)(\rho,r)$ is weakly increasing. The sequence of  optimal relaxed controls $(r^n)$ in state $\rho$ has at least one accumulation point and every accumulation point is an optimal relaxed control in the original model with non-regularized filter.

\section{Application}\label{sec:applic}
In this section we illustrate our approach by a simple example. The task is to steer a particle which moves on the real line into a target zone. At random time points the particle jumps into one of a finite number of states. However, the position of the particle cannot be observed. The only information we have is that after the random jump time points a noisy signal is received.

We consider this problem as a POPDMP where we specify the following data:
\begin{itemize}
\item[(i)] The state space is assume to be $\R$ and the action space is assumed to be $A:=[-1;1]$. Here $a\in A$ refers to the speed with which the particle is moved into one of the two directions. Obviously $A$ is compact, hence (C1) is satisfied.
\item[(ii)] The set of possible post jump states is assumed to be $E^0:=\{-2,0,2\}$ and we set $y^1:=-2, y^2:=0$ and $y^3:=2$. Thus (B1) is valid.
\item[(iii)] The controlled drift is given by \begin{equation}\label{eq:PDEDefinedDriftex}
\frac{d}{dt} \Phi^r(y,t) = \int_A a r_t(da), \hspace{0.5cm} \Phi(y,0)=y.
\end{equation} this implies (C4).
\item[(iv)] We set $\lambda^A \equiv 1$ and $\beta:=1$, i.e. the transition rate is uncontrolled and the discount rate is equal to one. Hence (C2) is satisfied.
\item[(v)] The jump transition kernel $Q^A$ is also uncontrolled and specified as follows (see also Figure \ref{fig:Anwendungsbeispiel}):
$$
Q^A(\cdot|y) := \left\{\begin{array}{lr}
\delta_{y^1}(\cdot), &y\leq-2\\[2ex]
(-3-2y) \cdot \delta_{y^1}(\cdot)  + (4+2y)\cdot\delta_{y^2}(\cdot) , &-2 <y<-\frac{3}{2}\\[2ex]
\delta_{y^2}(\cdot), &-\frac{3}{2} \leq y \leq \frac{3}{2}\\[2ex]
(4-2y) \cdot\delta_{y^2}(\cdot) + (2y-3)\cdot\delta_{y^3}(\cdot) , &\frac{3}{2}<y<2\\[2ex]
\delta_{y^3}(\cdot), &2\leq y,
\end{array}\right.
$$
where $\delta_x$ is the Dirac measure on point $x\in E^0.$ Note that $Q^A$ is weakly continuous, hence (C3) holds.
\item[(vi)] The cost function is independent of $a$ and given by (see also Figure \ref{fig:Anwendungsbeispiel}):
$$
c(y) := \left\{\begin{array}{lr}
10, &y\leq -2\\[2ex]
-30-20y, &-2<y-\frac{3}{2}\\[2ex]
0, &-\frac{3}{2}\leq y \leq \frac{3}{2}\\[2ex]
20y-30, &\frac{3}{2}<y<2\\[2ex]
10, &y\geq 2.
\end{array}
\right.
$$
Note that $c$ is continuous which implies (C5).
\item[(vii)] For the density of the signal we take the discrete density  $f_\epsilon(-1)= f_\epsilon(0)=f_\epsilon(1)=\frac13$. Hence (B2) is satisfied.
\end{itemize}

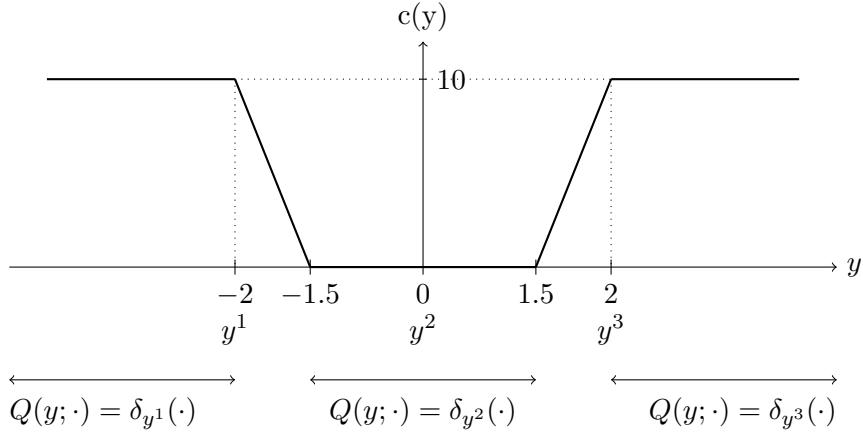
\begin{figure}
\begin{center}
\begin{tikzpicture}[scale=0.5]
    \draw[->] (-11,0) -- (11,0) node[anchor=west] {$y$};
    \draw[->] (0,-0.2)node[anchor=north] {$0$} -- (0,6) node[anchor=south] {c(y)};
		\draw(0,-1)node[anchor=north]{$y^2$};
		\draw[thick, color=black] (-5,5)--(-3,0);
		\draw[thick, color=black] (3,0)--(5,5);
		\draw[thick, color=black] (-3,0)--(3,0);
		\draw[thick, color=black] (-10,5)--(-5,5);
		\draw[thick, color=black] (5,5)--(10,5);
		\draw[dotted] (-5,5)--(5,5);
		\draw[dotted] (-5,5)--(-5,0);
		\draw(-5,0.2)--(-5,-0.2)node[anchor=north]{$-2$};
		\draw(-5,-1)node[anchor=north]{$y^1$};
		\draw(-3,0.2)--(-3,-0.2)node[anchor=north]{$-1.5$};
		\draw(3,0.2)--(3,-0.2)node[anchor=north]{$1.5$};
		\draw[dotted] (5,5)--(5,0);
		\draw(5,0.2)--(5,-0.2)node[anchor=north]{$2$};
		\draw(5,-1)node[anchor=north]{$y^3$};
		\draw(-0.1,5)--(0.1,5)node[anchor=west]{$10$};
		\draw[<->] (-3,-3) -- (3,-3);
		\draw[<->] (5,-3) -- (11,-3);
		\draw[<->] (-11,-3) -- (-5,-3);
		\draw(0,-3.2)node[anchor=north]{$Q(y;\cdot)=\delta_{y^2}(\cdot)$};
		\draw(8.5,-3.2)node[anchor=north]{$Q(y;\cdot)=\delta_{y^3}(\cdot)$};
		\draw(-8.5,-3.2)node[anchor=north]{$Q(y;\cdot)=\delta_{y^1}(\cdot)$};
    \end{tikzpicture}
		\end{center}
\caption{Cost function and transition kernel in concrete application example.}\label{fig:Anwendungsbeispiel}
\end{figure}

First note that since only
$  \int_A a r_t(da)$
enters the equations we can restrict to deterministic controls. We still denote them by $r$ and consider $r_t\in A$ instead of $r_t \in \mathcal{P}(A)$ which is a slight abuse of notation. The updating operator $\Psi$ in this case reads
$$ \Psi(\rho,r,s,x)(y^j) = \frac{f(x-y^j)\sum_y Q(y^j | y+\int_0^s r_udu) \rho(y)}{\sum_{\hat{y}}f(x-\hat{y})\sum_y Q(\hat{y} | y+\int_0^s r_udu) \rho(y)}.$$
Since $\lambda^A=\lambda$ and $Q^A=Q$ are uncontrolled we do not have to consider the regularized filter.
The one-stage reward is given by
$$\hat{g}(\rho,r) = \sum_y \rho(y) \int_0^\infty e^{-2t} c\big(y+ \int_0^t r_sds\big)dt$$
and finally the transition kernel is for a measurable function $v: \mathcal{P}(E^0) \to\R$ given by
$$ \int v(\rho') \hat{Q}(d\rho'|\rho,r) =  \frac{1}{3}\int_0^\infty e^{-2t} \sum_{d=-1}^1 \sum_{y'}v\big(\Psi(\rho,r,t,y'+d)\big)\sum_y Q(y' | y+\int_0^t r_sds)\rho(y) dt.$$
The optimization problem is
\begin{equation}\label{eq:optex} J(x) := \inf_{\pi\in\Pi^P} \int \Eop_{x,y}^\pi \Big[\int_0^\infty e^{-t} c(Y_t) dt \Big] Q_0(dy|x)\end{equation}
and the corresponding filtered MDP is defined by the $T$-operator which in this example reads
\begin{eqnarray}
\nonumber  (Tv)(\rho) &=& \inf_{r\in\mathcal{R}} \Bigg\{  \int_0^\infty e^{-2t} \Big[ \sum_y  \rho(y)  c\Big(y+ \int_0^t r_s ds\Big) \\
&& \hspace*{1cm} +    \frac{1}{3} \sum_{d=-1}^1 \sum_{y'}v\big(\Psi(\rho,r,t,y'+d)\big)\sum_y Q(y' | y+\int_0^t r_s ds)\rho(y)\Big] dt\Bigg\}.
\end{eqnarray}

Since all assumptions of Theorem \ref{theo:VTV} are satisfied we obtain in this example.

\begin{lemma}\label{theo:exlsc}
 In this POPDMP there exists an $f^*\in\Pi$ with $TV=T_{f^*}V$ and the stationary policy $(f^*,f^*,\ldots)$ is optimal for the filtered MDP. The optimal policy for the original problem \eqref{eq:optex} is thus $(\pi_0^P,\pi_1^P,\ldots)$ with
      \begin{eqnarray*}
        \pi_0^P(x,t) &=& f^\star\big(Q_0(\cdot|x)\big)(t),\quad x\in \R^d \\
        \pi^P_n(h_n,t) &=& f^\star\big(\mu_n(\cdot|h_n)\big)(t), \quad h_n\in H_n.
      \end{eqnarray*}

\end{lemma}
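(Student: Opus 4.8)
The plan is to recognize that Lemma \ref{theo:exlsc} is nothing but the specialization of Theorem \ref{theo:VTV} c) to the concrete model, so the entire task reduces to checking that the data $(\Phi^r,\lambda^A,Q^A,Q_\epsilon,c)$ specified in items (i)--(vii) meet every hypothesis of that theorem. I would therefore proceed as a verification argument rather than a fresh derivation.

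First I would collect the assumptions one by one, reading them off the construction: (C1) holds because $A=[-1,1]$ is a compact metric space (item (i)); (C2) holds because $\lambda^A\equiv 1$ is trivially continuous and bounded above and below by $1$ (item (iv)); (C3) is the weak continuity of $Q^A$, verified in item (v); (C5) is the lower semicontinuity of $c$, which is immediate since $c$ is continuous (item (vi)); (B1) holds with $E^0=\{-2,0,2\}$ and $Q_0,Q^A$ concentrated on $E^0$ (item (ii)); and (B2) holds because $f_\epsilon$ is the bounded discrete density with $f_\epsilon(-1)=f_\epsilon(0)=f_\epsilon(1)=\tfrac13$ with respect to the counting measure (item (vii)). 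For (C4) I would note that the explicit drift $\Phi^r(y,t)=y+\int_0^t\int_A a\,r_s(da)\,ds$ depends on $r$ only through the bounded continuous integrand $a\mapsto a$, so convergence of $r^n$ to $r$ in the Young topology forces $\Phi^{r^n}(y,t)\to\Phi^r(y,t)$; this gives the continuity required in item (iii).

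Next comes the only point that is not pure bookkeeping. The existence statement of Theorem \ref{theo:VTV} relies on the weak continuity of the filtered transition kernel (Lemma \ref{lem:qprimeweaklycont}), which in the general model is only guaranteed for the regularized filter $\hat{\Psi}$. Here, however, both $\lambda^A\equiv 1$ and $Q^A$ are uncontrolled, so by Remark \ref{rem:Quncontrolled} the original, unregularized filter $\Psi$ already depends on $r$ solely through the continuous map $r\mapsto\Phi^r(y,\cdot)$, and Lemma \ref{lem:qprimeweaklycont} holds for $\Psi$ itself. I would invoke this remark explicitly to justify that no regularization is needed, which is precisely why the lemma can, and does, work directly with the $\Psi$ displayed in the application section.

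With all of (C1)--(C5), (B1), (B2) in force and Lemma \ref{lem:qprimeweaklycont} available for the genuine filter, Theorem \ref{theo:VTV} c) applies verbatim: it yields a decision rule $f^*\in\Pi$ with $TV=T_{f^*}V$, the optimality of the stationary policy $(f^*,f^*,\ldots)$ for the filtered MDP, and, via the Correspondence Lemma \ref{theo:CorrespondenceTheorem}, the representation of the optimal piecewise open-loop policy through $f^*$ and the filter $\mu_n$. I expect the main, and essentially only, obstacle to be the verification of (C4) together with the appeal to Remark \ref{rem:Quncontrolled}; every other assumption is read off the model by inspection.
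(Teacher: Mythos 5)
Your proposal is correct and takes essentially the same approach as the paper: the paper verifies (C1)--(C5), (B1), (B2) directly from items (i)--(vii) of the model specification, notes that since $\lambda^A$ and $Q^A$ are uncontrolled the regularized filter is unnecessary (Remark \ref{rem:Quncontrolled}), and then cites Theorem \ref{theo:VTV} c) together with the Correspondence Lemma. Your explicit Young-topology argument for (C4), using $\psi(s,a)=a\,1_{[0,t]}(s)$ as test integrand, fills in a detail the paper only asserts with ``this implies (C4)''.
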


In this example we have also computed the value function and the optimal policy numerically by value iteration. The value function $V$ as a function of $\rho_1\in(0,1)$ and $\rho_3\in(0,1-\rho_1)$ can be seen in Figure \ref{fig:VF01}. The optimal policy turned out to always use one of the values $\{-1,0,1\}$. More precisely we obtain
\begin{equation}\label{eq:optPolicyPartObsAppl2}
\pi_n(h_n,t):= \left\{\begin{array}{cc}
 1_{\{t \leq \frac{1}{2}\}} & \mbox{ if } \mu_n^1\ge  \mu_n^3,\\
 - 1_{\{t \leq \frac{1}{2}\}}  & \mbox{ if } \mu_n^1\le \mu_n^3.
 \end{array}\right.
\end{equation}
Recall that $\mu_n(h_n)$ is the recursively calculated conditional distribution on $\mathcal{P}(E^0)$.

\begin{figure}
    \centering
     \includegraphics[height=8cm]{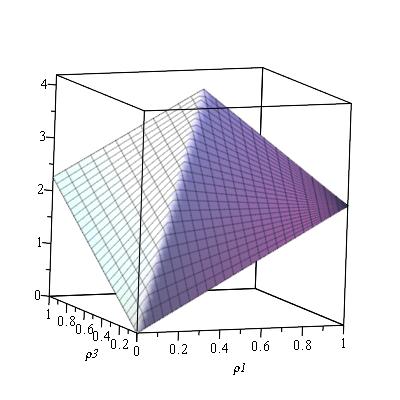} 
   \caption{Value function $V(\rho_1,\cdot,\rho_3)$ }
    \label{fig:VF01}
\end{figure}

\section{Appendix}\label{sec:app}
\subsection{Young Topology}
The Young topology is metrizable and convergence can be characterized as follows (for a proof see e.g. \cite{lange17} Lemma A.21):
\begin{definition}\label{YOUNG-4} Let $(r^n)_{n \in \N}$ be a sequence in $\mathcal{R}$ and $r \in \mathcal{R}$. Then
\begin{equation} \lim_{n\to\infty} r^n = r \hspace{0.7cm}
\Longleftrightarrow \hspace{0.7cm} \lim_{n\to\infty} \int_0^\infty \int_A \psi(t,a) ~r_t^n(da) ~dt = \int_0^\infty \int_A \psi(t,a) ~r_t(da) ~dt \hspace{0.5cm}
\end{equation}
for all $\psi: \R_+\times A\to\R $ which are measurable in the first component, continuous and bounded in the second component and satisfy
$$ \int_0^\infty \sup_{a\in A} |\psi(t,a)| dt < \infty.$$
\end{definition}

\subsection{An Auxiliary Result and some Proofs}
The following auxiliary result is very helpful for our convergence statements. For a proof see e.g. \cite{lange17}, Lemma B.12:

\begin{lemma}\label{le:helpA1}
Let $X$ be a separable and metrizable space, $Y$ a compact metric space and $f: X \times Y \to \R$ continuous. Then $\lim_{n\to\infty} x_n = x$ implies
$$ \lim_{n\to\infty} \sup_{y \in Y} \left|f(x_n,y) - f(x,y) \right| =0.$$
\end{lemma}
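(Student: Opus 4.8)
The plan is to argue by contradiction, exploiting sequential compactness of $Y$ together with the joint continuity of $f$. Suppose the conclusion fails. Then there is some $\varepsilon>0$ and a subsequence — which, after relabeling, I may take to be the full sequence — such that $\sup_{y\in Y}|f(x_n,y)-f(x,y)|\ge \varepsilon$ for every $n$. The first observation I would record is that for each fixed $n$ the map $y\mapsto |f(x_n,y)-f(x,y)|$ is continuous on $Y$ (as a composition of the continuous $f(x_n,\cdot)$, $f(x,\cdot)$ and the absolute value), and $Y$ is compact; hence this supremum is in fact attained. Thus I can choose, for each $n$, a point $y_n\in Y$ with
$$ \bigl|f(x_n,y_n)-f(x,y_n)\bigr|\ge \varepsilon. $$

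Next I would pass to a convergent subsequence of the $y_n$. Since $Y$ is a compact metric space it is sequentially compact, so there is a subsequence $(y_{n_k})$ and a limit point $y^\star\in Y$ with $y_{n_k}\to y^\star$. At the same time $x_{n_k}\to x$, because every subsequence of a convergent sequence converges to the same limit. The product space $X\times Y$ is metrizable (a product of metrizable spaces, one of them compact), so convergence there is coordinatewise sequential convergence; consequently $(x_{n_k},y_{n_k})\to (x,y^\star)$ and also $(x,y_{n_k})\to (x,y^\star)$ in $X\times Y$.

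Now I would invoke the (sequential) continuity of $f$ at the point $(x,y^\star)$ along both sequences. This gives $f(x_{n_k},y_{n_k})\to f(x,y^\star)$ and $f(x,y_{n_k})\to f(x,y^\star)$, whence
$$ \bigl|f(x_{n_k},y_{n_k})-f(x,y_{n_k})\bigr|\longrightarrow \bigl|f(x,y^\star)-f(x,y^\star)\bigr|=0, $$
contradicting the lower bound $\varepsilon$ established above. This contradiction proves the claimed uniform convergence.

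The only genuinely delicate points — neither of them hard, but both worth stating explicitly — are that the supremum defining $\sup_{y}|f(x_n,y)-f(x,y)|$ is actually attained (so that the extracted maximizers $y_n$ exist, using compactness of $Y$ and continuity of the integrand), and that $f$, being continuous on the metrizable product $X\times Y$, is sequentially continuous, so that convergence of the pairs $(x_{n_k},y_{n_k})$ may be converted into convergence of the values $f(x_{n_k},y_{n_k})$. I expect no substantive obstacle beyond bookkeeping with the two nested subsequences; the separability of $X$ is not strictly needed for the argument and only guarantees metrizability of the product, which is what makes continuity coincide with sequential continuity.
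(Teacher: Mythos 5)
Your proof is correct. Note that the paper does not actually prove Lemma \ref{le:helpA1} itself: it defers to \cite{lange17}, Lemma B.12, so there is no in-paper argument to compare against; your contradiction argument via sequential compactness of $Y$ is the standard route for this kind of uniform-convergence-on-a-compact-factor statement and is complete as written. Two small inaccuracies in your closing commentary are worth correcting, though neither affects validity. First, metrizability of $X\times Y$ requires only that both factors be metrizable (any finite product of metrizable spaces is metrizable); compactness of $Y$ plays no role there, and separability of $X$ is irrelevant to that point as well -- it is simply an unused hypothesis carried along from the applications in the paper. Second, you only need the implication ``continuity $\Rightarrow$ sequential continuity,'' which holds in \emph{every} topological space, so the metrizability discussion is unnecessary for that step too; what compactness of $Y$ genuinely buys you is the extraction of the convergent subsequence $(y_{n_k})$ and, as a convenience, attainment of the supremum. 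Even the attainment is dispensable: choosing $y_n$ with $\bigl|f(x_n,y_n)-f(x,y_n)\bigr|\ge \varepsilon/2$ from the definition of the supremum would run the same argument without invoking the extreme value theorem.
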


\vspace*{0.4cm}
\noindent
{\bf Proof of Lemma \ref{le:LambdaRIsContinuous}:}

Note that by definition $\Gamma^r(y,t) = \beta t + \Lambda^r(y,t)$. Thus, it is enough to show that the mapping $r \mapsto \Lambda^r(y,t)$ is continuous. Let $y \in E^0$ and $t \geq 0$. Further, let $(r^n)$ be a sequence in $\mathcal{R}$ with $\lim_{n\to\infty} r^n = r \in \mathcal{R}$. By definition of $\Lambda^r$, we then get:
\begin{eqnarray}
\nonumber && \left| \Lambda^{r^n}(y,t) - \Lambda^r(y,t)\right| \\
\nonumber &=& \left|\int_0^t \int_A \lambda^{A}\left(\Phi^{r^n}(y,s),a\right) ~r^n_s(da) ~ds- \int_0^t \int_A \lambda^{A}\left(\Phi^{r}(y,s),a\right) ~r_s(da) ~ds\right|\\
\nonumber &\leq & \left| \int_0^t \int_A \left\{ \lambda^{A}\left(\Phi^{r^n}(y,s),a\right) - \lambda^{A}\left(\Phi^{r}(y,s),a\right) \right\} r^n_s(da) ~ds \right|\\
\label{eq:StetigkeitLambda} && + \left|\int_0^t \int_A \lambda^{A}\left(\Phi^{r}(y,s),a\right) ~r^n_s(da) ~ds - \int_0^t \int_A \lambda^{A}\left(\Phi^{r}(y,s),a\right) ~r_s(da) ~ds \right|.
\end{eqnarray}
Looking now at the first summand of \eqref{eq:StetigkeitLambda} we find that for $n\to\infty$
\begin{eqnarray*}
&&\left| \int_0^t \int_A \left\{ \lambda^{A}\left(\Phi^{r^n}(y,s),a\right) - \lambda^{A}\left(\Phi^{r}(y,s),a\right) \right\} r^n_s(da) ~ds \right|\\
 &\leq & \int_0^t \sup_{a\in A} \left|\lambda^{A}(\Phi^{r^n}(y,s),a) - \lambda^{A}(\Phi^{r}(y,s),a) \right| ~ds \to 0.
\end{eqnarray*}
This convergence is true since  by the continuity of $\Phi^r$and $\lambda^A$ and by the compactness of $A$ we have with the help of Lemma \ref{le:helpA1}
$$\lim_{n\to\infty}   \sup_{a\in A} \left|\lambda^{A}(\Phi^{r^n}(y,s),a) - \lambda^{A}(\Phi^{r}(y,s),a) \right| = 0.$$
By the boundedness of $\lambda^{A}$, dominated convergence leads to the convergence of the integral towards zero.

Now, looking at the second summand in \eqref{eq:StetigkeitLambda} we obtain by the characterization of the Young topology (see Definition \ref{YOUNG-4})  and by assumption (C2) that
\begin{equation} \label{eq:IntegralConvergenceLambda} \lim_{n\to\infty} \int_0^t \int_A \lambda^{A}\left(\Phi^{r}(y,s),a\right) ~r^n_s(da) ~ds = \int_0^t \int_A \lambda^{A}\left(\Phi^{r}(y,s),a\right) ~r_s(da) ~ds.
\end{equation}
This implies the statement.

\vspace*{0.4cm}
\noindent
{\bf Proof of Lemma \ref{theo:GPrimeLowerSemiContinuous}:}

We first show that when $c$ is continuous and bounded, then $(\rho,r)\mapsto \hat{g}(\rho,r)$ is continuous and bounded. Let $c$ be continuous and bounded  and suppose  $\lim_{n\to\infty} (\rho^n, r^n) = (\rho,r)$ with respect to the product topology. Let us denote $\eta^r(y,t) := e^{-\Gamma^r(y,t)}$. Based on the representation of $g$ we then get
\begin{eqnarray*}
\left| \hat{g}(\rho^n, r^n) - \hat{g}(\rho,r)\right| &\leq& \sum_{y\in E^0} \left| \rho^n(y) \int_0^\infty \eta^{r^n}(y,t) \int_A c(\Phi^{r^n}(y,t),a) ~r^n_t(da) ~dt\right.\\
&& - \left. \rho(y) \int_0^\infty \eta^{r}(y,t) \int_A c(\Phi^{r}(y,t),a) ~r_t(da) ~dt \right|.
\end{eqnarray*}
From our assumption it follows that we obtain pointwise convergence $\lim_{n\to\infty} \rho^n(y) = \rho(y)$   and it thus remains to show that for all $y\in E^0$
$$\lim_{n\to\infty}\int_0^\infty \eta^{r^n}(y,t) \int_A c(\Phi^{r^n}(y,t),a) ~r^n_t(da) ~dt = \int_0^\infty \eta^{r}(y,t) \int_A c(\Phi^{r}(y,t),a) ~r_t(da) ~dt.$$
Hence consider
\begin{eqnarray}\label{eq:GPrimeContinuousAbsch}
\nonumber && \left| \int_0^\infty \eta^{r^n}(y,t) \int_A c(\Phi^{r^n}(y,t),a) ~r^n_t(da) ~dt - \int_0^\infty \eta^{r}(y,t) \int_A c(\Phi^{r}(y,t),a) ~r_t(da) ~dt \right|\\
\nonumber &\leq& \left| \int_0^\infty \left(\eta^{r_n}(y,t) - \eta^r(y,t)\right) \int_A c(\Phi^{r^n}(y,t),a) ~r^n_t(da) ~dt \right|\\
\label{eq:rdiff} && + \left| \int_0^\infty \eta^r(y,t)  \left\{ \int_A c(\Phi^{r^n}(y,t),a) ~r^n_t(da) - \int_A c(\Phi^{r}(y,t),a) ~r_t(da)\right\} ~dt\right|.
\end{eqnarray}
Now, as $c$ is bounded by our initial assumption, the first summand satisfies
\begin{eqnarray*}
&& \left| \int_0^\infty \left(\eta^{r^n}(y,t) - \eta^r(y,t)\right) \int_A c(\Phi^{r^n}(y,t),a) ~r^n_t(da) ~dt \right|\\
 &\leq& \sup_{x,a} |c(x,a)| \int_0^\infty \left| \eta^{r^n}(y,t) - \eta^r(y,t) \right| ~dt \to 0.
\end{eqnarray*}
The convergence follows from dominated convergence where $|\eta^{r^n}(y,t) - \eta^r(y,t)|$ is dominated by $2  e^{-\beta t}$ and $\lim_{n\to\infty} \Gamma^{r^n}(y,t) =  \Gamma^{r}(y,t)$ because of Lemma \ref{le:LambdaRIsContinuous}.

The second summand of \eqref{eq:rdiff} can be dominated by $Term_1+Term_2$ with
$$Term_1:=\int_0^\infty \eta^r(y,t)  \int_A \left| c(\Phi^{r^n}(y,t),a) -  c(\Phi^{r}(y,t),a) \right| ~r^n_t(da) ~dt$$
and
$$
Term_2:= \left| \int_0^\infty \eta^r(y,t)  \int_A c(\Phi^{r}(y,t),a) ~r^n_t(da) ~dt - \int_0^\infty \eta^r(y,t)  \int_A c(\Phi^{r}(y,t),a) ~r_t(da) ~dt \right|.
$$
We will show that both, $Term_1$ and $Term_2$ converge to zero. First,  as $c$ is continuous and bounded and  $A$ is compact we obtain with the help of Lemma \ref{le:helpA1}
$$\lim_{n\to\infty} \sup_{a\in A} \left| c(\Phi^{r^n}(y,t),a) -  c(\Phi^{r}(y,t),a) \right| = 0.$$
Thus $Term_1$ converges to zero by dominated convergence applied for dominating function $t\mapsto 2 \sup_{x,a} | c(x,a)|\eta^r(y,t)$.
For $Term_2$ we get convergence to zero from the characterization of the Young topology convergence in Definition \ref{YOUNG-4} as
$$(t,a) \mapsto \eta^r(y,t)  c(\Phi^r(y,t), a)$$
is measurable in $t$ and continuous and bounded  in $a$ and because of
$$\int_0^\infty  \eta^r(y,t)\; \sup_{a\in A}|c(\Phi^r(y,t), a)| ~dt \leq \sup_{(x,a)\in \R^d\times A}| c(x,a) | \int_0^\infty e^{-\beta t} ~dt < \infty.$$

We also get that $\hat{g}$ is bounded when $c$ is bounded.

Now, let $c$ be lower semi-continuous (and non-negative, what we always assume). Then, there is a sequence $(c_m)$ of continuous and bounded functions with $c_m \uparrow c$ for $m \to \infty$ (see \cite{BertShrev}, Lemma 7.14). Thus we can apply our previous findings to $c_m$ and by monotonicity of the convergence obtain that $\hat{g}$ is lower semi-continuous.

\vspace*{0.4cm}
\noindent
{\bf Proof of Lemma \ref{lem:Continuous2}:}
 Suppose $(r^n) \subset \mathcal{R}$ and $r^n \to r \in \mathcal{R}$ for $n \to \infty$.  Let us denote $\eta^r(y,s) := e^{-\Gamma^r(y,s)}$ and consider $\int \tilde{q}(s,y',x|y,r)ds$. Obviously the factor $f(x-y')$ does not depend on $r$ and can be ignored. We obtain
      \begin{eqnarray*}
   && \left| \int_0^\infty \eta^{r^n}(y,s) \int_A \lambda^A\left(\Phi^{r^n}(y,s),a\right) Q^{A}\left(y' | \Phi^{r^n}(y,s),a\right) r^n_s(da) ~ds-  \right. \\
  &&\hspace*{1cm} - \left. \int_0^\infty \eta^{r}(y,s) \int_A \lambda^A\left(\Phi^{r}(y,s),a\right) Q^{A}\left(y' | \Phi^{r}(y,s),a\right) r_s(da) ~ds \right|\\
   &\leq&  \left| \int_0^\infty  \int_A \Big\{ \eta^{r^n}(y,s) \lambda^A\left(\Phi^{r^n}(y,s),a\right) Q^{A}\left(y' |\Phi^{r^n}(y,s),a\right)- \right. \\
   && \hspace*{1cm} - \left. \eta^{r}(y,s) \lambda^A\left(\Phi^{r}(y,s),a\right) Q^{A}\left(y' |\Phi^{r}(y,s),a\right)\Big\}  r^n_s(da) ~ds \right|  \\
   &+& \left| \int_0^\infty   \int_A \eta^{r}(y,s)  \lambda^A\left(\Phi^{r}(y,s),a\right) Q^{A}\left(y' | \Phi^{r}(y,s),a\right) r^n_s(da)ds-  \right. \\
   &&\hspace*{1cm} - \left.  \int_0^\infty   \int_A \eta^{r}(y,s)  \lambda^A\left(\Phi^{r}(y,s),a\right) Q^{A}\left(y' | \Phi^{r}(y,s),a\right) r_s(da)ds\right|.
\end{eqnarray*}
 The first of these two terms converges to zero with Lemma \ref{le:helpA1}. The second term converges to zero by the definition of the Young topology and the fact that
 $$  \int_0^\infty \eta^r(y,s) \sup_{a\in A}| \lambda^A\left(\Phi^{r}(y,s),a\right) Q^{A}\left(y' | \Phi^{r}(y,s),a\right)| ds<\infty. $$

\vspace*{0.4cm}
\noindent
{\bf Proof of Lemma \ref{lem:Continuous}:} In the same way as in the proof of Lemma \ref{lem:Continuous2} it can be shown that
$$ r \mapsto \int_{\R_+} h_\sigma(s-u) \tilde{q}(u,y',x|y,r) du$$
is continuous. The statement follows since $(\rho,r)\mapsto \hat{\Psi}(\rho,r,x,u)$ is a continuous composition of these functions.

\vspace*{0.4cm}
\noindent
{\bf Proof of Lemma \ref{lem:qprimeweaklycont}:}
\begin{proof}
We have to show that
$$ (\rho,r) \mapsto \int_{\R_+} \int_{\R^d} \sum_y v(\hat{\Psi}(\rho,r,s,x)) \tilde{q}^{SX}(s,x|y,r) \nu(dx)ds \rho(y)$$
is continuous for $v$ bounded continuous. Obviously it is enough to show for fixed $y\in E^0$ that
$$ (\rho,r) \mapsto \int_{\R_+}\int_{\R^d}  v(\hat{\Psi}(\rho,r,s,x)) \tilde{q}^{SX}(s,x|y,r) \nu(dx)ds $$
is continuous. Let $\lim_{n\to\infty} (\rho^n,r^n)=(\rho,r)$ w.r.t.\ the product topology. We obtain:
\begin{eqnarray*}
&&\Big| \int\limits_{\R_+}\int\limits_{\R^d} v\big(\hat{\Psi}(\rho^n,r^n,s,x)\big)\tilde{q}^{SX}(s,x|y,r^n) \nu(dx)ds - \int\limits_{\R_+}\int\limits_{\R^d} v\big(\hat{\Psi}(\rho,r,s,x)\big)\tilde{q}^{SX}(s,x|y,r) \nu(dx)ds \Big| \\
&\le&  \Big| \int_{\R_+}\int_{\R^d} v\big(\hat{\Psi}(\rho^n,r^n,s,x)\big)\Big( \tilde{q}^{SX}(s,x|y,r^n) -\tilde{q}^{SX}(s,x|y,r)\Big) \nu(dx)ds \Big| \\
&&  \hspace*{2cm}+ \Big| \int_{\R_+}\int_{\R^d}\Big( v\big(\hat{\Psi}(\rho^n,r^n,s,x) \big)- v\big(\hat{\Psi}(\rho,r,s,x)\Big)  \tilde{q}^{SX}(s,x|y,r) \nu(dx)ds \Big|.
\end{eqnarray*}
Since $v$ is bounded by a constant, the first term can be bounded by
\begin{equation*}\sup_\rho |v(\rho)| \times \Big| \int_{\R_+}\int_{\R^d}  \tilde{q}^{SX}(s,x|y,r^n) -\tilde{q}^{SX}(s,x|y,r)\nu(dx)ds \Big|
\end{equation*}
which converges to zero for $n\to\infty$ because of Lemma \ref{lem:Continuous2} and dominated convergence. The second term converges to zero by dominated convergence and continuity of $v$ and $\hat{\Psi}$.
\end{proof}

\bibliographystyle{abbrv}

\end{document}